\numberwithin{figure}{section}
\numberwithin{equation}{section}
\newtheorem{thm}[equation]{Theorem}
\newtheorem*{thm*}{Theorem}
\newtheorem{lem}[equation]{Lemma}
\newtheorem{prop}[equation]{Proposition}
\theoremstyle{definition}
\newtheorem{defn}[equation]{Definition}
\newtheorem{remark}[equation]{Remark}
\newtheorem{ex}[equation]{Example}
\DeclareMathOperator{\End}{End}
\DeclareMathOperator{\Hom}{Hom}
\DeclareMathOperator{\Aut}{Aut}
\DeclareMathOperator{\GL}{GL}
\DeclareMathOperator{\PGL}{PGL}
\DeclareMathOperator{\PSL}{PSL}
\DeclareMathOperator{\GammaL}{\Gamma L}
\DeclareMathOperator{\Gal}{Gal}
\DeclareMathOperator{\Stab}{Stab}
\DeclareMathOperator{\im}{im}
\DeclareMathOperator{\Pf}{Pf}
\renewcommand{\phi}{\varphi}
\begin{document}

\title{Enumerating isoclinism classes of semi-extraspecial groups}
\author{Mark L. Lewis}
\address{
	Kent State University\\
	Department of Mathematical Sciences\\
  MSB 233\\
	Kent, OH 44242
}
\email{lewis@math.kent.edu}

\author{Joshua Maglione}
\address{
	Kent State University\\
	Department of Mathematical Sciences\\
  MSB 233\\
	Kent, OH 44242
}
\email{jmaglion@math.kent.edu}

\date{\today}
\keywords{Pfaffian, genus 2, bilinear maps}

\begin{abstract}
We enumerate the number of isoclinism classes of semi-extraspecial $p$-groups with derived subgroup of order $p^2$.
To do this, we enumerate $\GL(2, p)$-orbits of sets of irreducible, monic polynomials in $\mathbb{F}_p[x]$.
Along the way, we also provide a new construction of an infinite family of semi-extraspecial groups as central quotients of Heisenberg groups over local algebras.
\end{abstract}

\maketitle

\section{Introduction}

All groups in this paper are finite, and $p$ is a prime.
A $p$-group $G$ is said to be {\it semi-extraspecial} if every subgroup $N$ having index $p$ in $Z (G)$ satisfies the condition that $G/N$ is extra-special.
Semi-extraspecial groups are a special case of Camina groups, i.e.\ groups $G$ where every irreducible character vanishes on $G\backslash G'$ \cites{Camina, Macdonald:semi-extraspecial, CMS:Frobenius-groups}.
In fact, semi-extraspecial groups are exactly the Camina $p$-groups of class 2 \cite{Verardi}*{Theorem~1.2}.

These groups are virtually indistinguishable, so in the context of group isomorphism and enumeration, they pose a challenge.
Indeed, the (nontrivial) central quotients of Heisenberg groups over fields, considered in \cite{LW:Heisenberg}, are all semi-extraspecial groups.
Those quotients have exponent $p$, isomorphic character tables, two possible orders of centralizers, and a limited number of possible isomorphism types of automorphism groups---after fixing the order of the group and derived subgroup.

Applying ideas of Brooksbank-Maglione-Wilson \cite{BMW}, we the study of semi-extraspecial $p$-groups $G$ with $|G'| = p^2$ by considering a $\PGL(2,p)$-action on multisets of polynomials in $\mathbb{F}_p[x]$. 
In particular, we enumerate the number of isoclinism classes of such groups up to order $p^{12}$.
A slight extension of Verardi \cite{Verardi}*{Corollary~5.11} states that there is a unique isoclinism class of semi-extraspecial groups $G$ with $|G| = p^6$ and $|G'| = p^2$ for each prime $p$.
We show that a similar result holds for the next larger size of semi-extraspecial groups.

\begin{thm}\label{main1}
For each prime $p$, there is a unique isoclinism class of semi-extraspecial groups $G$ with $|G| = p^8$ and $|G'| = p^2$.
\end{thm}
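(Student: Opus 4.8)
\section*{Proof proposal for Theorem~\ref{main1}}

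The plan is to reduce the classification of semi-extraspecial groups $G$ with $|G| = p^8$ and $|G'| = p^2$ to a question about binary cubic forms over $\mathbb{F}_p$, and then to resolve that question by a direct orbit count. Write $V = G/G'$ and $W = G' = Z(G)$; the commutator map induces an alternating bilinear map $b \colon V \times V \to W$ with $\dim_{\mathbb{F}_p} V = 6$ and $\dim_{\mathbb{F}_p} W = 2$, and two such groups are isoclinic precisely when their bilinear maps lie in the same orbit under $\GL(V) \times \GL(W)$. Choosing a basis of $W$ presents $b$ as a pencil $\{ s\,b_1 + t\,b_2 : (s,t) \in \mathbb{F}_p^2 \}$ of alternating forms on $V$, whose Pfaffian $\Pf(s\,b_1 + t\,b_2)$ is a binary form of degree $3$ in $(s,t)$ and a relative invariant of the action. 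The group $G$ is semi-extraspecial exactly when every nonzero member of the pencil is nondegenerate; in particular $b_1$ and $b_2$ are nondegenerate, so the leading and constant coefficients of the Pfaffian are nonzero and it is a genuine binary cubic with no $\mathbb{F}_p$-rational zero. A cubic in $\mathbb{F}_p[x]$ with no root in $\mathbb{F}_p$ is irreducible, so (up to scalar) the Pfaffian is a monic irreducible cubic, and, crucially, it is squarefree.

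Next I would invoke the genus-two analysis in the spirit of Brooksbank--Maglione--Wilson: because the Pfaffian here is squarefree, the pencil $\{ s\,b_1 + t\,b_2 \}$ is determined up to $\GL(V) \times \GL(W)$ by that Pfaffian, viewed up to scalars and up to the $\GL(2,p)$-action on binary cubics, i.e.\ by the $\PGL(2,p)$-orbit of its three roots in $\mathbb{P}^1(\overline{\mathbb{F}_p})$. Over $\overline{\mathbb{F}_p}$ a pencil with distinct Pfaffian roots splits as a direct sum of two-dimensional pieces, one for each root, and Galois descent then rebuilds the $\mathbb{F}_p$-pencil from the Galois-stable root configuration; this is the step where the earlier adjoint/derivation machinery does the real work, and it is the main obstacle — one must know not merely that the Pfaffian is an invariant but that a squarefree Pfaffian is a \emph{complete} isoclinism invariant. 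Granting this, isoclinism classes of the groups in question correspond bijectively to $\PGL(2,p)$-orbits of monic irreducible cubics over $\mathbb{F}_p$, equivalently to $\PGL(2,p)$-orbits of Frobenius orbits of size $3$ inside $\mathbb{P}^1(\mathbb{F}_{p^3}) \setminus \mathbb{P}^1(\mathbb{F}_p)$.

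It then remains to show $\PGL(2,p)$ acts transitively on that set, which I would do by combining a count with a freeness argument. The points of $\mathbb{P}^1(\mathbb{F}_{p^3})$ not defined over $\mathbb{F}_p$ number $(p^3 + 1) - (p + 1) = p^3 - p = |\PGL(2,p)|$, and $\PGL(2,p)$ permutes them and commutes with the Frobenius $\sigma$ of $\mathbb{F}_{p^3}/\mathbb{F}_p$. If a nonidentity $g \in \PGL(2,p)$ fixed such a point $a$, then $g$ would also fix $\sigma(a)$ and $\sigma^2(a)$, giving three distinct fixed points of $g$ on $\mathbb{P}^1(\overline{\mathbb{F}_p})$, which is impossible for a nontrivial element of $\PGL(2,\overline{\mathbb{F}_p})$. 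Hence $\PGL(2,p)$ acts freely, and by the order count transitively, on $\mathbb{P}^1(\mathbb{F}_{p^3}) \setminus \mathbb{P}^1(\mathbb{F}_p)$; since $g$ commutes with $\sigma$ it carries Frobenius triples to Frobenius triples, so $\PGL(2,p)$ is transitive on those triples as well. With the bijection of the previous paragraph this yields exactly one isoclinism class, and it is nonempty because $p^3 - p > 0$ forces an irreducible cubic to exist and the associated group is realized, for instance, as a central quotient of the Heisenberg group over $\mathbb{F}_{p^3}$.
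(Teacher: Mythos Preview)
Your proposal is correct and lands on the same endpoint as the paper --- a single $\PGL(2,p)$-orbit of monic irreducible cubics --- but the two arguments reach it differently. The paper first proves that $G$ is centrally indecomposable by ruling out small factors (no semi-extraspecial genus-$2$ pieces of order $p^4$ or $p^5$), then invokes Theorem~\ref{thm:Pfaffian-bijection} and simply cites Vaughan--Lee (Theorem~\ref{thm:GL2-orbits}) for $N(p,3)=1$. You instead skip the decomposition discussion entirely: since a cubic over $\mathbb{F}_p$ with no $\mathbb{F}_p$-root is irreducible and therefore squarefree, the Kronecker--Dieudonn\'e normal form already forces the pencil to be determined by its Pfaffian, so indecomposability never needs to be argued separately. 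Your free-action count on $\mathbb{P}^1(\mathbb{F}_{p^3})\setminus\mathbb{P}^1(\mathbb{F}_p)$ is a clean, self-contained proof of $N(p,3)=1$ that the paper does not give. The trade-off is that the paper's route via central decompositions and Theorem~\ref{thm:indecomp} is what scales to the later theorems ($|G|=p^{10},p^{12}$), where the Pfaffian is no longer automatically irreducible and one genuinely needs the multiset of Pfaffians of a fully-refined decomposition; your shortcut is specific to the cubic case.
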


At this point, one might be tempted to conjecture for every prime $p$ and every positive integer $n$
that there is a unique isoclinism class of semi-extraspecial groups $G$ with $|G| = p^{2n+2}$ and $|G'| = p^2$.
However, starting with the next size, we will see that the situation becomes more complicated.

\begin{thm}\label{main2}
For every prime $p$, there are $p + 3 - \gcd(2, p)$ isoclinism classes of semi-extraspecial groups $G$ with $|G| = p^{10}$ and $|G'| = p^2$.
\end{thm}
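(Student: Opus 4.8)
The plan is to invoke the classification established in the preceding sections, which reduces the problem to counting $\PGL(2,p)$-orbits of the combinatorial data attached to such a group $G$: when $|G| = p^{2n+2}$ and $|G'| = p^2$, that data is built from a product $f = \prod_i g_i^{e_i}$ of prime powers of monic irreducibles $g_i \in \mathbb{F}_p[x]$, each of degree at least $2$, with $\sum_i e_i\deg g_i = n$, recorded with enough bookkeeping (equivalently, as the isomorphism type of the $n$-dimensional algebra $\bigoplus_i \mathbb{F}_p[x]/(g_i^{e_i})$) to separate a repeated block $g^e$ from a single block $g^{2e}$; two such data are equivalent exactly when they lie in a common orbit of the fractional-linear $\PGL(2,p)$-action on $x$. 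For $n=4$ the possible shapes are: (i) one irreducible quartic; (ii) the square of an irreducible quadratic; (iii) two distinct irreducible quadratics; (iv) two copies of one irreducible quadratic. I would count the $\PGL(2,p)$-orbits in each stratum and add.

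Strata (ii) and (iv) are dealt with at once: a monic irreducible quadratic over $\mathbb{F}_p$ is a degree-$2$ closed point of $\mathbb{P}^1_{\mathbb{F}_p}$, and $\PGL(2,p)$ is transitive on these, since the stabilizer of such a point is the normalizer of a nonsplit torus, of order $2(p+1)$, whose orbit exhausts all $\tfrac12 p(p-1)$ of them when $p$ is odd (for $p = 2$ there is only one). So each of (ii) and (iv) contributes a single orbit.

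For stratum (iii) I would parametrize orbits by the cross-ratio $\lambda$ of the four roots, ordered so that the two Galois-conjugate pairs occupy positions $\{1,2\}$ and $\{3,4\}$. Because a matrix in $\GL(2,p)$ commutes with Frobenius, applying Frobenius permutes the points by $(1\,2)(3\,4)$ and fixes the cross-ratio, forcing $\lambda^p = \lambda$, so $\lambda \in \mathbb{F}_p\setminus\{0,1\}$; the only residual ambiguity is $\lambda\mapsto 1/\lambda$; every value of $\mathbb{F}_p\setminus\{0,1\}$ is attained, by an explicit computation in $\mathbb{F}_{p^2} = \mathbb{F}_p(\sqrt d)$; and two pairs sharing a value of $\lambda$ are $\PGL(2,p)$-equivalent, because the unique element of $\PGL(2,p^2)$ matching up three of the points (sharp $3$-transitivity) is fixed by Frobenius and hence lies in $\PGL(2,p)$. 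This gives $\tfrac12(p-1)$ orbits for odd $p$ and none for $p=2$. Stratum (i) is the same idea applied to the four Frobenius-conjugates $\alpha,\alpha^p,\alpha^{p^2},\alpha^{p^3}$ of a degree-$4$ element $\alpha$, now cyclically permuted by Frobenius: their cross-ratio satisfies $\lambda^p = \lambda/(\lambda-1)$, and a short manipulation rewrites it as $\lambda = 1 - z$ with $z = N_{\mathbb{F}_{p^4}/\mathbb{F}_{p^2}}(\alpha-\alpha^p)^{\,1-p}$ lying in the norm-one subgroup $\mu_{p+1}$ of $\mathbb{F}_{p^2}^\times$; the degenerate value $z=1$ (which is $\alpha\in\mathbb{F}_{p^2}$) is discarded, the residual symmetry is $z\mapsto z^{-1}$, surjectivity onto $\mu_{p+1}\setminus\{1\}$ comes from additive Hilbert~90 for $\alpha\mapsto\alpha-\alpha^p$ together with surjectivity of $N_{\mathbb{F}_{p^4}/\mathbb{F}_{p^2}}$, and completeness again uses sharp $3$-transitivity of $\PGL(2,p^4)$ plus Frobenius descent. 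This gives $\tfrac12(p+1)$ orbits for odd $p$ and one for $p=2$. Summing the four strata yields $\tfrac12(p+1) + 1 + \tfrac12(p-1) + 1 = p+2$ when $p$ is odd and $1 + 1 + 0 + 1 = 3$ when $p = 2$, which is $p + 3 - \gcd(2,p)$ in all cases.

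I expect the main obstacle to be the cross-ratio analysis of stratum (i): one must show simultaneously that the class of $z$ in $\mu_{p+1}$ modulo inversion is a complete invariant of the $\PGL(2,p)$-orbit of an irreducible quartic and that every such class (other than that of $z=1$) actually occurs, which requires controlling exactly which elements $\gamma = \alpha - \alpha^p$ come from a genuine degree-$4$ element $\alpha$ rather than from $\alpha\in\mathbb{F}_{p^2}$. The case $p = 2$, where the trace map and the factor $2$ degenerate and where the $\gcd(2,p)$ correction originates, must be checked separately, but involves only the three irreducible quartics and the single irreducible quadratic over $\mathbb{F}_2$.
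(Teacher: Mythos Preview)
Your overall structure is correct and your final count is right. The four strata you list exhaust the possible multisets of Pfaffians attached to a fully refined central decomposition, and they correspond exactly to the paper's case division: strata (i) and (ii) are the centrally indecomposable groups (counted in the paper as $I(p,10)=N(p,4)+N(p,2)$), stratum (iii) is the decomposable case with centroid $\mathbb{F}_p$, and stratum (iv) is the decomposable case with centroid $\mathbb{F}_{p^2}$ (the paper's genus~1 case, handled by Lemma~\ref{lem:genus-1}).

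Where you genuinely diverge from the paper is in the counting mechanism. The paper never computes a cross-ratio: for stratum~(i) it simply quotes Vaughan-Lee's formula $N(p,4)=\tfrac12(p+2-\gcd(2,p))$, and for stratum~(iii) it argues via Burnside's lemma, counting fixed points of each element of the dihedral stabilizer $\Stab_{\PGL(2,p)}((f))\cong D_{p+1}$ acting on the remaining irreducible quadratics (this is Lemma~\ref{lem:dihedral-orbits}, which involves a somewhat delicate case analysis on involutions inside and outside $\PSL(2,p)$ according to $p\bmod 4$). Your cross-ratio approach replaces both of these with a uniform geometric invariant: for stratum~(iii) the cross-ratio $\lambda$ of the four roots lands in $\mathbb{F}_p\setminus\{0,1\}$ modulo $\lambda\leftrightarrow\lambda^{-1}$, and for stratum~(i) it lands in (a coset of) the norm-one torus $\mu_{p+1}\subset\mathbb{F}_{p^2}^\times$ modulo inversion. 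This is more self-contained and more conceptual than the paper's argument---in particular it rederives $N(p,4)$ rather than citing it---at the cost of having to verify surjectivity and completeness via the Galois-descent step you outline (the unique $\PGL(2,p^4)$-element matching three points is Frobenius-stable, hence rational). The paper's route is shorter on the page because it outsources stratum~(i) entirely, but your method would scale more transparently if one wanted to redo the analogous count over a non-prime field $\mathbb{F}_q$, where the paper's involution-counting in Lemma~\ref{lem:dihedral-orbits} would need to be reworked anyway.
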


For the next size, the formula gets even more complicated.

\begin{thm} \label{main3}
For every prime $p$, the number of isoclinism classes of semi-extraspecial groups $G$ with $|G|=p^{12}$ and $|G'|=p^2$ is
\[ \frac{1}{30}\left( 11p^2 - 5p - 22 +10\gcd(3,p-2) + 6\gcd(5,p) + 12\gcd(5, p^2-1)\right).  \]
\end{thm}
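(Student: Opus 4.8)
\emph{Plan of proof.}
The plan is to convert Theorem~\ref{main3} into a purely combinatorial counting problem and then apply Burnside's lemma twice. By the translation of semi-extraspecial $p$-groups with $|G'| = p^2$ into alternating bimaps $V \times V \to W$ with $\dim_{\mathbb{F}_p} W = 2$ (the $\PGL(2,p)$-action on polynomials described in the introduction, following \cite{BMW}), a group with $|G| = p^{12}$ has $\dim_{\mathbb{F}_p} V = 10$, so its commutator bimap has a Pfaffian $\Pf$ which is a binary form of degree $5$ on $W^*$; the group is semi-extraspecial exactly when $\Pf$ has no $\mathbb{F}_p$-rational zero, and two such groups are isoclinic exactly when the corresponding root configurations lie in the same $\PGL(2,p)$-orbit. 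Since $5$ is odd, the only partitions of $5$ into parts $\ge 2$ are $5$ and $3+2$, so every admissible $\Pf$ is squarefree; this is why $\Pf$ is a complete invariant in this case and why one counts \emph{sets} of irreducible polynomials rather than multisets. Hence each orbit is of Type~I (one irreducible quintic, i.e.\ a $\Gal(\overline{\mathbb{F}}_p/\mathbb{F}_p)$-orbit of size $5$ in $\mathbb{P}^1(\overline{\mathbb{F}}_p)$) or Type~II (an irreducible cubic together with an irreducible quadratic, i.e.\ a size-$3$ Galois orbit together with a size-$2$ one); as $\PGL(2,p)$ preserves the factorization type, the answer is $N_{\mathrm{I}}(p) + N_{\mathrm{II}}(p)$.

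For Type~I, let $X_5$ be the set of size-$5$ Galois orbits, so $|X_5| = (p^5-p)/5$, acted on by $\PGL(2,p)$. If $g$ fixes $A \in X_5$, then $g|_A$ commutes with the $5$-cycle that Frobenius induces on $A$, hence lies in the cyclic group it generates; if $g|_A \ne 1$ then $g^5$ fixes five points of $\mathbb{P}^1$, forcing $g^5 = 1$ and (as $g \ne 1$) $|g| = 5$. So only order-$5$ elements contribute beyond the identity, and these exist in $\PGL(2,p)$ precisely when $5 \mid p(p^2-1)$, that is, for $p = 5$ (unipotent) or $p \equiv \pm 1 \pmod 5$ (semisimple, split or non-split). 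For an order-$5$ element $g$, I will count the $g$-fixed elements of $X_5$ by analysing the action of the abelian group $\langle g, \sigma \rangle \cong C_5 \times C_5$ (with $\sigma$ the Frobenius of $\mathbb{F}_{p^5}/\mathbb{F}_p$) on $\mathbb{P}^1(\mathbb{F}_{p^5})$: its orbits of size $5$ are the orbits of points whose stabiliser is one of the twisted-Frobenius subgroups $\langle g^i \sigma \rangle$ with $i \ne 0$, and each such subgroup has $p+1$ fixed points by Lang's theorem, which yields $4(p-1)/5$, $4(p+1)/5$, or $4$ fixed orbits in the split, non-split, and unipotent cases respectively. Multiplying by the number of order-$5$ elements and dividing by $|\PGL(2,p)| = p(p^2-1)$ gives a contribution of $8/5$ in the semisimple regime and $4/5$ for $p = 5$, so $N_{\mathrm{I}}(p) = \tfrac{1}{5}\bigl(p^2 - 2 + \gcd(5,p) + 2\gcd(5, p^2-1)\bigr)$.

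For Type~II, let $X_3$ and $X_2$ be the sets of size-$3$ and size-$2$ Galois orbits, so $|X_3 \times X_2| = \tfrac{p^3-p}{3}\cdot\tfrac{p^2-p}{2}$, with the diagonal $\PGL(2,p)$-action. Suppose $g \ne 1$ fixes $(A,B)$. Then $g|_A$ commutes with the induced $3$-cycle, hence lies in $C_3$; it is not trivial (else $g$ fixes three points and $g = 1$), so it is a $3$-cycle, forcing $g^3 = 1$ and $|g| = 3$. Then $g|_B$ has order dividing $\gcd(3,2) = 1$, so $g$ fixes $B$ pointwise; as an order-$3$ element of $\PGL(2,p)$ has at most two fixed points, $B$ must be exactly the fixed-point set of $g$, which forces $g$ to be non-split semisimple of order $3$ (its two fixed points conjugate but irrational). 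Such $g$ exist precisely when $3 \mid p+1$, i.e.\ $p \equiv 2 \pmod 3$; then $B$ is determined by $g$, and the number of $g$-fixed elements of $X_3$ is found, from the $C_3 \times C_3$-action of $\langle g, \sigma\rangle$ on $\mathbb{P}^1(\mathbb{F}_{p^3})$, to be $2(p+1)/3$. With $p(p-1)$ such elements $g$, the Burnside contribution is $2/3$, giving $N_{\mathrm{II}}(p) = \tfrac{1}{6}\bigl(p^2 - p - 2 + 2\gcd(3, p-2)\bigr)$.

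Adding $N_{\mathrm{I}}(p) + N_{\mathrm{II}}(p)$ over the common denominator $\mathrm{lcm}(5,6) = 30$ produces exactly $\tfrac{1}{30}\bigl(11p^2 - 5p - 22 + 10\gcd(3,p-2) + 6\gcd(5,p) + 12\gcd(5,p^2-1)\bigr)$. The $g = 1$ terms are immediate, so the real work — and the main obstacle — is the Burnside bookkeeping: one must verify that elements of order $2, 4, 6, \dots$ never stabilise a Type~I or Type~II configuration, and then determine, case by case (unipotent versus split versus non-split, and the degenerate small primes $p \in \{2,3,5\}$), the exact number of Galois orbits stabilised by each order-$3$ or order-$5$ element, the count of such elements, and the resulting rational contribution. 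A subsidiary point needing care is the reduction itself: that squarefreeness of the degree-$5$ Pfaffian makes it a complete invariant, and that isoclinism of the groups matches pseudo-isometry of their commutator bimaps.
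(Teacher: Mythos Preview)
Your argument is correct and the final count matches, but the route differs from the paper's. For the indecomposable (Type~I) case the paper simply quotes Vaughan-Lee's formula $N(p,5)$ from Theorem~\ref{thm:GL2-orbits}, whereas you re-derive it by a direct Burnside count on size-$5$ Galois orbits via the $\langle g,\sigma\rangle$-action on $\mathbb{P}^1(\mathbb{F}_{p^5})$; this is more self-contained but essentially reproves a cited result. For the decomposable (Type~II) case the paper argues structurally: the stabiliser of a quadratic is dihedral of order $2(p+1)$ in $\PGL(2,p)$ and that of a cubic has order~$3$, and Lemma~\ref{lem:Syl-3} gives a bijection between Sylow $3$-subgroups and these dihedral groups when $p\equiv 2\pmod 3$, from which one reads off that exactly $2(p+1)/3$ cubics share a given order-$3$ stabiliser and hence the orbit count $(p^2-p+4)/6$. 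Your Burnside approach on $X_3\times X_2$ reaches the same $2(p+1)/3$ from the fixed-point side; the two computations are dual (orbit--stabiliser versus Burnside). One point your outline should make explicit: the reduction ``isoclinism $\Leftrightarrow$ $\PGL(2,p)$-orbit of the Pfaffian'' presupposes genus~$2$ over $\mathbb{F}_p$, and the paper disposes of the alternative centroid $\mathbb{F}_{p^2}$ separately via Lemma~\ref{lem:genus-1}, obtaining $H(p^2,6)=0$ since $\dim_{\mathbb{F}_{p^2}}(G/G')=5$ is odd. In your write-up this case is only implicitly excluded, and in light of Example~\ref{ex:genus} (where the total Pfaffian fails to distinguish a genus-$1$ group from a genus-$2$ group) it deserves a sentence; your squarefreeness observation then correctly shows that the total Pfaffian recovers the multiset of Theorem~\ref{thm:Pfaffian-bijection}.
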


We could continue in this fashion and find formulas, depending on $p$, for the number of isoclinism classes of semi-extraspecial groups $G$ with $|G| = p^{2n + 2}$ and $|G'| = p^2$ for values of $n$ that are larger than $5$.
However, these functions appear to get progressively more complicated as $n$ increases, and we believe that these cases illustrate the procedure so that the sufficiently motivated reader can compute the function for larger $n$ if desired.

\medskip

Along the way to enumerating these isoclinism classes, we find a new way to construct an infinite family of semi-extraspecial groups.
To date, all known constructions of semi-extraspecial groups begin with a semifield $A$ or by taking a central product of semi-extraspecial groups.  A (finite) semi-field is an algebra with identity where the product $a *b = 0$ if and only if either $a = 0$ or $b = 0$.  Since $A$ is finite, this implies that every nonzero element has both a left inverse and a right inverse.  However, since $A$ is not necessarily associative or commutative, these left and right inverses need not be equal.  It is this invertibility condition that makes the following groups semi-extraspecial.

If $A$ is any algebra, the Heisenberg group over $A$ is the group
\begin{equation}\label{eqn:semifield-group}
  H(A) = \left\{ \begin{bmatrix} 1 & e & z \\ & 1 & f \\ & & 1 \end{bmatrix} \;\middle|\; e,f,z\in A \right\}.
\end{equation}
It is not difficult to see that $H(A)$ is semi-extraspecial if and only if $A$ is a semifield \cite{Lewis:expository}.  Because $H(A)$ is semi-extraspecial when $A$ is a semifield, every central quotient is either abelian or semi-extraspecial.  One can twist the semifield group in~\eqref{eqn:semifield-group} to get a different class of semi-extraspecial groups \cite{Lewis:ses-groups}.  At this time, the only way we know how to construct a semi-extraspecial group is to take a quotient of $H(A)$ for a semifield $A$, a quotient of the twisted semifield groups in \cite{Lewis:ses-groups}, or to take central products of these groups.

To obtain the new construction in this paper, we start with an algebra $A$ with zero divisors.  Because of the zero divisors, $H(A)$ is not semi-extraspecial.
We correct this by constructing the quotient by a subspace that complements the minimal ideal in $A$.

\begin{thm}\label{main4}
Let $K$ be a finite field, $a(x)\in K[x]$ be irreducible of degree $\geq 2$, and let $c>1$ be an integer.
Suppose $S\leq A$ is a $K$-subspace, and set $G=H(A)/N(S)$, where
\[ N(S) = \left\{ \begin{bmatrix} 1 & 0 & s \\ & 1 & 0 \\ & & 1 \end{bmatrix} \;\middle|\; s\in S \right\} . \]
Then $G$ is semi-extraspecial if, and only if, $A= S + (a(x)^{c-1})$.
\end{thm}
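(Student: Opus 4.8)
The plan is to translate the condition that $G$ be semi-extraspecial into a statement about the ideal structure of $A = K[x]/(a(x)^c)$ together with the subspace $S$, and then to exploit the fact that $A$ is a finite chain ring. First I would record the ambient structure. Since $a(x)$ is irreducible, $A$ is local with maximal ideal $\mathfrak{m} = (a(x))$, and its nonzero ideals form a chain
\[ A \supsetneq (a(x)) \supsetneq (a(x)^2) \supsetneq \cdots \supsetneq (a(x)^{c-1}) \supsetneq 0 ; \]
in particular $(a(x)^{c-1})$ is the minimal nonzero ideal of $A$ and is contained in every nonzero ideal. On the group side, $N(S)$ lies in $Z(H(A)) = H(A)'$ (here one uses that $A$ has an identity), so $N(S) \trianglelefteq H(A)$, and consequently $G' = H(A)'/N(S) =: \overline{Z}$, while $Z(G) \supseteq \overline{Z}$; moreover $\overline{Z} \cong A/S$ and $G/G' \cong H(A)/H(A)' \cong (A \oplus A, +)$, which is elementary abelian because $K$ has characteristic $p$.

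Next I would set up the dictionary between commutators and ideals. Writing $g \in H(A)$ for a matrix with $(1,2)$-entry $e$ and $(2,3)$-entry $f$, a direct computation of $[g,h]$ as $h$ ranges over $H(A)$ shows that $[g,H(A)] = N(eA + Af)$; since $A$ is commutative, $eA + Af = (e) + (f)$, the ideal generated by $e$ and $f$. Passing to the quotient, $[gN(S),G]$ corresponds to $((e)+(f)+S)/S \subseteq A/S \cong \overline{Z}$. I would then invoke the standard characterization of semi-extraspecial groups (see \cite{Verardi}): a finite $p$-group $G$ with $G/G'$ elementary abelian is semi-extraspecial if and only if $Z(G) = G'$ and $[x,G] = Z(G)$ for every $x \in G \setminus Z(G)$. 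Combining this with the dictionary---and noting that we may assume $S \neq A$, as otherwise $G$ is abelian and neither side of the theorem holds---one finds that $G$ is semi-extraspecial precisely when
\[ (e) + (f) + S = A \qquad \text{for all } (e,f) \in A \times A \text{ with } (e,f) \neq (0,0). \]
Indeed this single family of conditions simultaneously rules out $Z(G) \supsetneq \overline{Z}$ (forcing $Z(G) = \overline{Z} = G'$) and yields surjectivity of each commutator map $[x,-] \colon G \to Z(G)$ for $x \notin Z(G)$.

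Finally I would collapse this family to a single condition using the chain structure. If $(e,f) \neq (0,0)$, then $(e) + (f)$ is a nonzero ideal of $A$ and hence contains $(a(x)^{c-1})$, so $(e)+(f)+S \supseteq (a(x)^{c-1}) + S$; conversely the pair $(e,f) = (a(x)^{c-1}, 0)$ gives $(e)+(f) = (a(x)^{c-1})$ exactly. Therefore the whole family of conditions is equivalent to the single requirement $(a(x)^{c-1}) + S = A$, which is the stated criterion. (For a direct ``only if'' one argues contrapositively: if $S + (a(x)^{c-1}) \subsetneq A$, then the coset $gN(S)$, with $g$ a matrix in $H(A)$ having $(1,2)$-entry $a(x)^{c-1}$ and $(2,3)$-entry $0$, witnesses the failure---either $(a(x)^{c-1}) \subseteq S$, in which case $gN(S) \in Z(G) \setminus G'$, or $(a(x)^{c-1}) \not\subseteq S$, in which case $[gN(S),G]$ is a proper subgroup of $\overline{Z} = G'$.)

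There is no genuinely hard computational step here; the points requiring care are: (i) stating the semi-extraspecial criterion precisely, in particular excluding the a priori possibility that $Z(G)$ is strictly larger than $\overline{Z}$---which the ideal condition handles automatically---and verifying the hypothesis that $G/G'$ is elementary abelian; (ii) dealing with the degenerate case $S = A$, which makes $G$ abelian and so must be excluded from the statement; and (iii) locating where the hypotheses are used---irreducibility of $a(x)$ is exactly what makes $A$ a chain ring so that the family of conditions collapses to its minimal member, while $\deg a \geq 2$ is what leaves room for $G' = Z(G)$ to properly contain a group of order $p$, i.e., for $G$ to be a genuinely new, non-extraspecial example.
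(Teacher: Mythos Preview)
Your argument is correct and follows essentially the same route as the paper's proof (Proposition~\ref{prop:ses-quotients}): reduce the semi-extraspecial condition to the requirement that $fA + S = A$ for every nonzero $f\in A$, and then use the local structure of $A$ to collapse this family to the single condition $(a(x)^{c-1}) + S = A$. The only cosmetic difference is that you invoke the chain-ring property (``every nonzero ideal contains the minimal ideal'') abstractly, whereas the paper writes each nonzero $f$ explicitly as $u\,a(x)^e$ with $u$ a unit and produces a preimage by hand; you are also more explicit than the paper about why $Z(G)=G'$ falls out automatically and about the degenerate case $S=A$.
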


We ask whether all of the groups in Theorem~\ref{main4} are also central quotients of the semifield groups in~\eqref{eqn:semifield-group}.  This is true for all of the ones we have been able to determine.  The case we have studied most closely are the quotients of the semifield groups of order $3^{12}$.  We note that although there are many infinite families of semifields that have been constructed \cites{JJB:Handbook, Kantor:finite-semifields}, there is no evidence that these families contain all of them. Indeed, they do not span the table of semifields of order $81$ \cite{Dempwolff:81}.
Going back to our initial question, we ask if $G$ is as in Theorem~\ref{main4}, then does there exist a semifield $A$ of order $p^{cd}$ such that $G \cong H(A)/Z$ where $Z$ is some proper subgroup of the center of $H (A)$?  It is possible that the construction in Theorem \ref{main4} might yield new semi-extraspecial groups that do not arise as a quotient of a group associated with any semifield.

\section{Groups and bilinear maps}

Assume that $K$ is a field and $U$, $V$, and $W$ are $K$-vector spaces.
A map $\circ : U\times V\rightarrowtail W$ is $K$-bilinear if for all $u,u'\in U$, $v,v'\in V$, and $k\in K$,
\begin{align*}
  (u+ku')\circ v &= u\circ v + k(u'\circ v), & u \circ (v+kv') &= u\circ v + k(u\circ v').
\end{align*}
We will use $\rightarrowtail$ to denote a bilinear map (bimap), and in the case where $\dim_K(W)=1$, we call $\circ$ a bilinear \emph{form}.
The \emph{radicals} of $\circ$ are $U^\perp=\{ v \in V \mid U\circ v = 0\}$ and $V^\perp=\{ u\in U\mid u\circ V = 0\}$.
The bimap $\circ$ is \emph{nondegenerate} if $U^\perp$ and $V^\perp$ are trivial, and $\circ$ is \emph{full} if $U\circ V = W$.
We say a bimap is \emph{fully nondegenerate} if it is both full and nondegenerate.

Our main source of bimaps come from finite $p$-groups, $G$, of exponent $p$ and class 2, via the Baer correspondence.
For such a group we define the biadditive commutator map to be $\circ_G : G/Z(G)\times G/Z(G) \rightarrowtail G'$, where
\[ (Z(G)g)\circ (Z(G)h) = [g,h]. \]
Then $\circ_G$ is $\mathbb{F}_p$-bilinear and fully nondegenerate.
In the case when $G$ is semi-extraspecial, both $G/Z(G)$ and $G'$ are elementary abelian, and we write $\circ_G : V\times V\rightarrowtail W$, where $V=G/Z(G)$ and $W=G'$.

Two bimaps $\circ, \bullet : V\times V \rightarrowtail W$ are \emph{pseudo-isometric} if there exists $(f,g)\in\Aut(V)\times \Aut(W)$ such that for all $u,v\in V$, $(uf)\bullet (vf) = (u\circ v)g$.
The bimaps are \emph{isometric} if they are pseudo-isometric with pseudo-isometry $(f,1)$.
In \cite{Hall:isoclinism}, Hall initiated a study of $p$-groups up to isoclinism as a weaker form of isomorphism.
Two class 2 groups $G$ and $H$ are \emph{isoclinic} if there exists isomorphisms between $f : G/Z(G)\rightarrow H/Z(H)$ and $g : G'\rightarrow H'$ such that $(f,g)$ is a pseudo isometry from $\circ_G$ to $\circ_H$.

\begin{defn}
A class $2$ $p$-group $G$ is \emph{semi-extraspecial} if $G'=Z(G)=G^p$ and if for all $g\in G\backslash Z(G)$ and $z\in G'$, there exists $h\in G$ such that $[g,h]=z$.
\end{defn}

In the context of bimaps, $\circ:V\times V\rightarrowtail W$, this is equivalent to the following.
For all $u\in V$ the linear maps $L_u, R_u : V\rightarrow W$, where $vL_u= u\circ v$ and $vR_u = v\circ u$, are surjective.
As our bimaps are alternating, it is enough to check this for just
This implies that $\dim(W)\leq\dim(V)$.

An important algebra associated to a $K$-bimap $\circ: V \times V\rightarrowtail W$ is its centroid: the largest algebra $C$ for which is $C$-bilinear.
For $\Omega = \End(U)\times \End(V) \times \End(W)$, the centroid is
\begin{align*}
  \mathcal{C}(\circ) &= \{ (X, Y, Z)\in\Omega \mid (uX)\circ v = u\circ (vY) = (u\circ v)Z \} .
\end{align*}
If $\circ$ is fully nondegenerate, then $\mathcal{C}(\circ)$ is commutative.
The centroid plays a critical role in direct decompositions of groups \cite{Wilson:direct-decomposition} and in the definition of genus \cite{BMW}.

\subsection{Pfaffians of bimaps}\label{sec:Pfaffians}

Fix a $K$-bimap, $\circ : V\times V\rightarrowtail W$.
For $\omega\in W^*:=\Hom(W,K)$, define $\circ^\omega:U\times V\rightarrowtail K$ where
\[ u\circ^\omega v = (u\circ v)\omega.\]
Fix a basis $\mathcal{X}$ for $V$ and $\mathcal{Z}^*$ for $W^*$.
For each $\omega\in\mathcal{Z}^*$, write $\circ^\omega$ as a matrix $M_\omega$ with respect to $\mathcal{X}$, the structure constants of the $K$-form $\circ^\omega$.
The \emph{generalized discriminant} of $\circ: V\times V\rightarrowtail W$, with respect to $\mathcal{X}$ and $\mathcal{Z}^*$, is a polynomial in indeterminants $x_\omega$, for $\omega\in\mathcal{Z}^*$, where
\[ \text{disc}(\circ) = \det\left(\sum_{\omega\in\mathcal{Z^*}}M_\omega x_\omega\right). \]
When $\circ$ is \emph{alternating}, i.e.\ for all $u\in V$, $u\circ u=0$, define the (\emph{generalized}) \emph{Pfaffian} of $\circ$ to be the homogeneous polynomial $\Pf(\circ)$ such that $\Pf(\circ)^2=\text{disc}(\circ)$.

The next theorem is due to I.D.\ Macdonald \cite{Macdonald:semi-extraspecial} who proved the conclusion for Camina groups.

\begin{thm}[\cite{Macdonald:semi-extraspecial}*{Theorem~3.1}]\label{thm:Pfaffian}
A Pfaffian of $\circ_G:V\times V\rightarrowtail W$ has no solutions in $\mathbb{P}W$ if, and only if, $G$ is semi-extraspecial.
\end{thm}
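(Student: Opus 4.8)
The plan is to translate the semi-extraspecial condition on $G$ into a statement about the matrix pencil $M(x) = \sum_{\omega\in\mathcal{Z}^*} M_\omega x_\omega$ associated to the commutator bimap $\circ_G : V\times V\rightarrowtail W$, and then interpret the vanishing of $\det M(x)$ geometrically. First I would record the reformulation already stated in the excerpt: $G$ is semi-extraspecial if and only if for every nonzero $u\in V$ the left-multiplication map $L_u : V\to W$, $v\mapsto u\circ v$, is surjective. Since $\circ_G$ is alternating, $L_u$ and $R_u$ have the same image, so it is enough to work with $L_u$. Surjectivity of $L_u$ is equivalent to saying that there is no nonzero functional $\omega\in W^*$ with $(u\circ v)\omega = 0$ for all $v\in V$, i.e.\ no $\omega$ with $u \in \circ^\omega\text{-radical on the left}$, i.e.\ $u$ is not in the left radical of the form $\circ^\omega$ for any nonzero $\omega$.

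Next I would set up coordinates. A point of $\mathbb{P}W^*$ is a line $\langle\omega\rangle$, which (after choosing the dual basis $\mathcal{Z}^*$) corresponds to a point $[x_\omega] \in \mathbb{P}(K^{\dim W})$, and the matrix of $\circ^\omega$ in the basis $\mathcal{X}$ is exactly $M(x) = \sum_\omega M_\omega x_\omega$ evaluated at that point; conversely every value of $M(x)$ arises this way. Thus $\mathrm{disc}(\circ_G) = \det M(x)$ vanishes at a point $[x]\in\mathbb{P}W^*$ precisely when the corresponding form $\circ^\omega$ is degenerate, i.e.\ has a nontrivial radical vector $u\in V$. Combining this with the previous paragraph: $\mathrm{disc}(\circ_G)$ has a zero in $\mathbb{P}W^*$ if and only if some $\circ^\omega$ with $\omega\neq 0$ is degenerate, if and only if there exist a nonzero $u\in V$ and a nonzero $\omega\in W^*$ with $L_u$ not surjective (its image lies in $\ker\omega$), if and only if $G$ is \emph{not} semi-extraspecial. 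Since $\Pf(\circ_G)^2 = \mathrm{disc}(\circ_G)$, the Pfaffian and the discriminant have the same zero locus in $\mathbb{P}W^*$ (the squaring only changes multiplicities, not the support), so $\Pf(\circ_G)$ has no solutions in $\mathbb{P}W^*$ exactly when $G$ is semi-extraspecial. One should be a little careful about whether the relevant projective space is $\mathbb{P}W$ or $\mathbb{P}W^*$; the statement as quoted writes $\mathbb{P}W$, but $\dim_K W = \dim_K W^*$ and the Pfaffian is defined via a chosen basis $\mathcal{Z}^*$ of $W^*$, so its zero locus naturally lives in $\mathbb{P}W^*$, which one identifies with $\mathbb{P}W$ after the basis choice; I would just note this identification and move on.

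The main obstacle I anticipate is the bookkeeping around $K$ not being algebraically closed: "no solutions in $\mathbb{P}W$" must mean no $K$-rational solutions, so the equivalence genuinely uses that the radical vector $u$ and the functional $\omega$ can be taken over $K$ itself (which they can, since degeneracy of a $K$-form is witnessed by a $K$-vector in its kernel, and $K$-rational points of the hypersurface $\{\det M(x)=0\}$ are by definition $K$-points $[x]$). A secondary subtlety is the alternating hypothesis: one needs $\dim_K V$ even for a square root $\Pf(\circ_G)$ of $\mathrm{disc}(\circ_G)$ to exist as claimed, and that each $M_\omega$ is skew-symmetric so that $\det M(x)$ is a perfect square in $K[x_\omega]$; I would invoke the standard fact that the determinant of a matrix of alternating forms is the square of its Pfaffian, as already used in the definition in Section~\ref{sec:Pfaffians}. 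Modulo these points the argument is essentially a dictionary translation, and the real content is Macdonald's observation that surjectivity of all the $L_u$ is the same as nondegeneracy of all the $\circ^\omega$.
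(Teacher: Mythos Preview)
The paper does not supply its own proof of Theorem~\ref{thm:Pfaffian}; it is simply quoted from Macdonald.  Your argument is correct and is the natural one: the semi-extraspecial condition is reformulated (as in the paragraph following the definition) as surjectivity of every $L_u$ for $u\neq 0$; by duality this is equivalent to nondegeneracy of $\circ^\omega$ for every nonzero $\omega\in W^*$; and nondegeneracy of $\circ^\omega$ is precisely the nonvanishing of $\det M(x)$, hence of $\Pf(\circ_G)$, at the corresponding projective point.  Your caveats about $K$-rationality, about $\mathbb{P}W$ versus $\mathbb{P}W^*$ (resolved by the chosen basis $\mathcal{Z}^*$), and about the odd-dimensional case (where the Pfaffian is identically zero and no semi-extraspecial group exists) are all to the point.
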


\section{Decompositions, groups of genus 2, and Pfaffians}

A \emph{central decomposition} of a group $G$ is a set $\mathcal{H}=\{H : H\leq G\}$ such that $\langle \mathcal{H}\rangle = G$, for all $H\in\mathcal{H}$, $\langle \mathcal{H}-H\rangle \ne G$, and $[H,\langle\mathcal{H}-H\rangle]=1$.
Note that this definition allows for $Z(H_1)\ne Z(H_2)$ for $H_1,H_2\in\mathcal{H}$.
A group $G$ is \emph{centrally indecomposable} if $\{ G\}$ is the only central decomposition of $G$.
A \emph{direct decomposition} of $G$ is a central decomposition $\mathcal{H}$ such that for all $H\in\mathcal{H}$, $H\cap \langle \mathcal{H}-H\rangle=1$, and $G$ is \emph{directly indecomposable} if $\{ G\}$ is the only direct decomposition of $G$.
We say a central (or direct) decomposition $\mathcal{H}$ is \emph{fully-refined} if for all $H\in\mathcal{H}$, $H$ is centrally (or directly) indecomposable.

\begin{lem}\label{lem:direct-decomp}
If $G$ is semi-extraspecial, then $G$ is directly indecomposable.
\end{lem}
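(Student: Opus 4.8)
The plan is to argue by contradiction via the bilinear-map dictionary developed above, using the Pfaffian criterion in Theorem~\ref{thm:Pfaffian}. Suppose $G$ is semi-extraspecial but admits a nontrivial direct decomposition $G = H_1 H_2$ with $H_1 \cap H_2 = 1$ and $[H_1, H_2] = 1$. Passing to the commutator bimap $\circ_G : V \times V \rightarrowtail W$ with $V = G/Z(G)$ and $W = G'$, a direct decomposition of $G$ induces a decomposition $V = V_1 \oplus V_2$ of $\mathbb{F}_p$-spaces on which $\circ_G$ is \emph{block diagonal}: $V_1 \circ V_2 = 0$, and $W = W_1 + W_2$ where $W_i = V_i \circ V_i$. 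The key point is that $[H_1, H_2] = 1$ forces the cross terms to vanish, and $H_1 \cap H_2 = 1$ together with $G' = Z(G)$ forces $W_1 \cap W_2 = 0$, so in fact $W = W_1 \oplus W_2$ with each $W_i$ of positive dimension (a factor with $W_i = 0$ would be central, contradicting that $H_i$ is a genuine, nontrivial factor not contained in $Z(G)$).

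Next I would exploit this block structure at the level of Pfaffians. Choosing a basis of $V$ adapted to $V = V_1 \oplus V_2$ and a basis of $W^*$ adapted to the dual decomposition $W^* = W_1^* \oplus W_2^*$, every matrix $M_\omega$ for $\omega \in W_1^*$ is supported on the $V_1$-block and every matrix $M_\omega$ for $\omega \in W_2^*$ is supported on the $V_2$-block. Hence the generic matrix $\sum_\omega M_\omega x_\omega$ is block diagonal, its determinant factors as the product of the two block determinants, and therefore
\[
\Pf(\circ_G) = \Pf(\circ_{G,1}) \cdot \Pf(\circ_{G,2}),
\]
where $\circ_{G,i}$ is the restricted bimap on $V_i \times V_i \rightarrowtail W_i$, and each factor involves only the variables $x_\omega$ for $\omega$ in the corresponding dual summand. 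Now because each $W_i$ is a nonzero proper subspace of $W$, each $\Pf(\circ_{G,i})$ is a nonconstant homogeneous polynomial in a proper, nonempty subset of the variables. Such a polynomial, being homogeneous of positive degree, vanishes at some point of $\mathbb{P}W$: set all the variables from the \emph{other} block to zero and any nonconstant homogeneous polynomial has a projective zero over the algebraic closure (and here we only need $\mathbb{P}W$ over $\mathbb{F}_p$ as in Theorem~\ref{thm:Pfaffian}; one uses that a nonzero homogeneous form on a space of dimension $\geq 1$ still has a zero on the projective space appearing in Macdonald's theorem). Thus $\Pf(\circ_G)$ has a zero on $\mathbb{P}W$, which by Theorem~\ref{thm:Pfaffian} contradicts the assumption that $G$ is semi-extraspecial.

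The main obstacle is making the last step airtight: one must check that a nonconstant homogeneous factor of the Pfaffian genuinely produces a solution in the projective space $\mathbb{P}W$ that Macdonald's criterion refers to, rather than merely over an extension field. The clean way around this is to observe that we do not need a single point: if $\Pf(\circ_{G,1})$ is nonconstant in the $W_1^*$-variables, then picking any nonzero $w_2 \in W_2$ (so all $W_2^*$-variables are evaluated consistently with $w_2$ while the $W_1^*$-variables are set to zero) gives a point of $\mathbb{P}W$ at which $\Pf(\circ_{G,1})$ vanishes identically — because it does not involve those coordinates — and hence $\Pf(\circ_G)$ vanishes there too. So the honest work is (i) verifying the block-diagonalization of the bimap from the group-theoretic direct decomposition, including that both $W_i$ are nonzero, and (ii) the elementary but slightly fussy bookkeeping that the Pfaffian (a square root of the discriminant) respects the block product; once those are in place the Pfaffian immediately acquires a zero and Theorem~\ref{thm:Pfaffian} finishes the proof.
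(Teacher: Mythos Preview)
Your argument is correct, but it is considerably more elaborate than the paper's. The paper disposes of the lemma in three lines of pure group theory: if $H$ is a direct factor and $h\in H\setminus Z(G)$, then the semi-extraspecial condition gives $[G,h]=G'$, while normality gives $[G,h]\le H$; hence $G'=Z(G)\le H$, and for any other factor $K$ one gets $Z(K)\le H\cap K=1$, forcing $K=1$. No bimaps or Pfaffians are needed.

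Your route instead passes through Theorem~\ref{thm:Pfaffian}: you block-diagonalize $\circ_G$ along the direct decomposition, factor the Pfaffian as $\Pf(\circ_{G,1})\cdot\Pf(\circ_{G,2})$ with the two factors living in disjoint sets of variables, and then exhibit a zero by evaluating at any nonzero point of $W_2$. This is sound, and your final paragraph correctly identifies and resolves the one delicate point (that such a point genuinely lies in $\mathbb{P}W$ over $\mathbb{F}_p$, not merely over a closure). The auxiliary claims you flag---that both $V_i$ and $W_i$ are nonzero---do require the short check that a central direct factor of a semi-extraspecial group is trivial, which you gesture at but should state explicitly; once that is done the odd-dimensional case (where one block Pfaffian is identically zero) is automatic.

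What each approach buys: the paper's argument is self-contained and uses nothing beyond the definition, so it can sit before any of the Pfaffian machinery. Your argument foreshadows the Pfaffian viewpoint that dominates the rest of the paper and makes the connection between decomposability and reducibility of the Pfaffian explicit, but it imports Theorem~\ref{thm:Pfaffian} for a statement that does not need it.
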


\begin{proof}
Suppose $\mathcal{H}$ is a direct decomposition of $G$ with $H,K\in\mathcal{H}$.
By definition, $[G,H]\leq H$, but since $G$ is semi-extraspecial, it follows that for all $z\in Z(K)$, there exists $g\in G$ such that $[g,h]=z$.
Therefore, $H=K$, so $\mathcal{H}=\{G\}$.
\end{proof}

The central product of $G$ with $H$ can be defined as a central quotient of $G\times H$ with an isomorphism $\theta: Z(G)\rightarrow Z(H)$, c.f.\ Hall \cite{Hall:isoclinism}.
We note that different isomorphisms $\theta$ may produce non-isomorphic central products of groups.
Our definition above allows for the case where $Z(G)$ may properly embed into $Z(H)$.
Because of this, we prove that with semi-extraspecial groups, we must have all the centers of each central factor isomorphic.
The following proposition is a slight generalization of Beisiegel \cite{Beisiegel:ses-groups}*{Lemma~2}.

\begin{prop}\label{prop:central-decomp}
Suppose $\mathcal{H}$ is a central decomposition of $G$ such that each $H\in\mathcal{H}$ is semi-extraspecial.
Then $G$ is semi-extraspecial if, and only if, for each $H\in\mathcal{H}$, $Z(H)=Z(G)$.
\end{prop}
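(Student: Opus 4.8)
The plan is to prove both directions by analyzing when the Pfaffian condition of Theorem~\ref{thm:Pfaffian} holds for $\circ_G$ in terms of the $\circ_H$. First I would set up notation: write $V=G/Z(G)$ and $W=G'$, and for each $H\in\mathcal{H}$ let $V_H$ be the image of $H$ in $V$ and $W_H=H'\leq W$. Since $\mathcal{H}$ is a central decomposition, $[H,\langle\mathcal{H}-H\rangle]=1$ gives $V=\bigoplus_{H\in\mathcal{H}} V_H$ (the sum is direct because $H\cap\langle\mathcal{H}-H\rangle$ lies in the center, so the images in $V$ intersect trivially), and $W=\sum_{H\in\mathcal{H}} W_H$ with $\circ_G$ restricted to $V_H\times V_{H'}$ being zero for $H\ne H'$. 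Thus $\circ_G$ is "block diagonal" across the $V_H$, and its restriction to $V_H\times V_H$ is exactly $\circ_H$ followed by the inclusion $W_H\hookrightarrow W$.

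For the easy direction, suppose $Z(H)=Z(G)$ for every $H\in\mathcal{H}$; I must show $G$ is semi-extraspecial. Here $W_H=H'=Z(H)=Z(G)=W$ for all $H$, so each $\circ_H:V_H\times V_H\rightarrowtail W$ is already a fully nondegenerate, semi-extraspecial bimap onto the full $W$. Given $\omega\in W^*$ nonzero, the form $\circ_G^\omega$ on $V=\bigoplus V_H$ is the orthogonal direct sum of the forms $\circ_H^\omega$. Since each $H$ is semi-extraspecial, Theorem~\ref{thm:Pfaffian} says $\Pf(\circ_H)$ has no projective zero, equivalently each $\circ_H^\omega$ is nondegenerate for every $\omega\ne 0$; hence $\circ_G^\omega$ is nondegenerate for every $\omega\ne 0$, so $\Pf(\circ_G)$ has no projective zero and $G$ is semi-extraspecial. (Alternatively, one argues directly from the definition: given $g\in G\setminus Z(G)$, write $g$ modulo center in terms of the $V_H$, pick a component lying in some $V_H$ nontrivially, and use semi-extraspeciality of that $H$, whose center is all of $Z(G)$, to realize any prescribed $z\in G'$.)

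For the converse, suppose some $H_0\in\mathcal{H}$ has $Z(H_0)\subsetneq Z(G)$, i.e.\ $W_{H_0}=H_0'$ is a proper subspace of $W$; I must show $G$ is not semi-extraspecial. Pick $\omega\in W^*$ nonzero with $W_{H_0}\subseteq\ker\omega$. Then $\circ_{H_0}^\omega$ is identically zero, so every vector of $V_{H_0}$ lies in the radical of $\circ_G^\omega$; since $V_{H_0}\ne 0$ (as $H_0$ is semi-extraspecial, hence nonabelian), $\circ_G^\omega$ is degenerate, so $\Pf(\circ_G)$ vanishes at $[\omega]\in\mathbb{P}W$, and by Theorem~\ref{thm:Pfaffian} $G$ is not semi-extraspecial. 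Equivalently and more concretely: take $g\in H_0\setminus Z(H_0)$ and $z\in Z(G)\setminus Z(H_0)$; for any $x\in G$, writing $x$ in terms of the central factors, $[g,x]$ lies in $H_0'=Z(H_0)$ since $g$ commutes with every factor other than $H_0$, so $[g,x]\ne z$, violating the semi-extraspecial condition.

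The main obstacle is bookkeeping in the first direction and, in particular, justifying the "orthogonal direct sum" decomposition of $\circ_G$ cleanly from the group-theoretic definition of central decomposition — one must be careful that $\langle\mathcal{H}-H\rangle$ can be larger than $\prod_{H'\ne H}H'$ set-theoretically and that the images $V_H$ genuinely span $V$ and are independent, which uses $[H,\langle\mathcal{H}-H\rangle]=1$ together with $G=\langle\mathcal{H}\rangle$. Once the bimap is seen to split orthogonally over the $V_H$, the Pfaffian factors (up to units) as $\Pf(\circ_G)=\prod_{H\in\mathcal{H}}\Pf(\circ_H)$ when all $W_H=W$, and the rest is immediate from Theorem~\ref{thm:Pfaffian}; when some $W_H\subsetneq W$ the Pfaffian picks up an extra factor supported on the hyperplanes containing $W_H$, which is exactly what produces the projective zero in the converse.
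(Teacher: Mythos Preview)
Your proof is correct, but it takes a different route from the paper's. The paper argues directly from the definition via quotients: for the forward direction it observes that if $Z(H)=Z(G)$ for all $H$ then any maximal $N<Z(G)$ is maximal in each $Z(H)$, so each $H/N$ is extraspecial and their central product $G/N$ is extraspecial; for the converse it produces $h\in H\setminus Z(H)$ and $z\in Z(G)\setminus H'$ with $[g,h]\ne z$ for all $g\in G$, which is exactly your ``alternative'' direct argument. You instead work through Theorem~\ref{thm:Pfaffian}, using the orthogonal block decomposition of $\circ_G$ over $V=\bigoplus V_H$ to see that $\circ_G^\omega$ is degenerate for some nonzero $\omega$ precisely when some $W_H\subsetneq W$. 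Your approach makes the bimap factorization explicit---which foreshadows the Pfaffian product structure used later in the paper---while the paper's argument is shorter and does not invoke Theorem~\ref{thm:Pfaffian}. Two small points worth tightening: (i) when you write ``$Z(H_0)\subsetneq Z(G)$, i.e.\ $W_{H_0}\subsetneq W$'' you are tacitly using $G'=Z(G)$; strictly, if $G'\ne Z(G)$ then $G$ already fails the definition of semi-extraspecial, so dispose of that case first; (ii) your reduction to $Z(H_0)\subsetneq Z(G)$ is justified because $Z(H)\le Z(G)$ is automatic from $[H,\langle\mathcal{H}-H\rangle]=1$ together with $G=\langle\mathcal{H}\rangle$, so the case $Z(H)>Z(G)$ that the paper treats is in fact vacuous.
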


\begin{proof}
First, we suppose that for all $H\in\mathcal{H}$, $Z(H)=Z(G)$.
If $N<Z(G)$ is a maximal subgroup, then $N$ is a maximal subgroup of each $Z(H)$.
Therefore, $H/N$ is extraspecial, and $G/N=\langle H/N : H\in\mathcal{H}\rangle$, where $Z(H/N)=Z(G/N)$.
This is a central decomposition of $G/N$ into extraspecial groups; hence, $G/N$ is extraspecial.

Conversely, suppose there exists $H\in\mathcal{H}$ such that $Z(H)\ne Z(G)$.
Let $N<Z(G)$ be a maximal subgroup.
If $Z(H)>Z(G)$, then $N$ is not a maximal subgroup of $Z(H)$, and in particular, $H/N$ is not extraspecial.
Therefore, $\langle HN/N: H\in\mathcal{H}\rangle$ cannot be extraspecial, so $G$ is not semi-extraspecial.
On the other hand, suppose there exists $z\in Z(G)$ such that $z\notin Z(H)=H'$.
Then for all $h\in H$ and $g\in H$, $[g,h]\ne z$.
From the central decomposition property, it follows that for all $h\in H$ and $g\in G$, $[g,h]\ne z$, so that $G$ is not semi-extraspecial.
\end{proof}

We state a useful theorem of Wilson~\cite{Wilson:direct-decomposition} characterizing direct decompositions of $p$-groups of class 2.
A commutative ring is \emph{local} if it has a unique maximal ideal.

\begin{thm}[\cite{Wilson:direct-decomposition}*{Theorem~8}]\label{thm:local-centroid}
Let $G$ be class $2$ and exponent $p$.
Then $\mathcal{C}(\circ_G)$ is a local $\mathbb{F}_p$-algebra if, and only if, $G$ is directly indecomposable.
\end{thm}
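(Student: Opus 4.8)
The plan is to transfer the statement across the Baer correspondence to the commutator bimap $\circ_G : V \times V \rightarrowtail W$, where $V = G/Z(G)$ and $W = G'$, and then to read off both conditions from the idempotents of $\mathcal{C}(\circ_G)$. We may assume $G$ is nonabelian, so $V \neq 0$. The first ingredient is the dictionary provided by the Baer correspondence: for $G$ of class $2$ and exponent $p$, a direct decomposition $G = H_1 \times \cdots \times H_n$ into nontrivial factors corresponds bijectively to an \emph{orthogonal decomposition} of $\circ_G$, that is, a pair of direct sum decompositions $V = V_1 \oplus \cdots \oplus V_n$ and $W = W_1 \oplus \cdots \oplus W_n$ into nonzero subspaces with $V_i \circ V_j = 0$ for $i \neq j$ and $V_i \circ V_i = W_i$; concretely $V_i = H_i Z(G)/Z(G)$ and $W_i = H_i'$. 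It is precisely the direct---as opposed to merely central---decomposition hypothesis that forces $Z(G) = Z(H_1) \times \cdots \times Z(H_n)$ and $G' = H_1' \times \cdots \times H_n'$, so that $V$ and $W$ split compatibly. Hence $G$ is directly indecomposable if and only if $\circ_G$ admits no orthogonal decomposition with two or more nonzero summands.

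The second, and central, step is to identify orthogonal decompositions of $\circ_G$ with complete sets of orthogonal idempotents of $\mathcal{C}(\circ_G)$. Here one first notes that, since $\circ_G$ is alternating and fully nondegenerate, every $(X, Y, Z) \in \mathcal{C}(\circ_G)$ has $X = Y$: putting $v = u$ in $(uX) \circ v = (u \circ v)Z$ gives $(uX) \circ u = 0$, and polarizing this identity (together with the antisymmetry of an alternating form) yields $(uX) \circ w = u \circ (wX)$ for all $u, w$; comparing with the defining relation $(uX) \circ w = u \circ (wY)$ gives $u \circ (w(X - Y)) = 0$ for all $u, w$, so $X = Y$ by nondegeneracy. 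Writing elements of $\mathcal{C}(\circ_G)$ as pairs $(X, Z)$, an orthogonal decomposition as above produces the projections $e_i = (\pi_i^V, \pi_i^W)$ onto $(V_i, W_i)$, which lie in $\mathcal{C}(\circ_G)$ and form a complete set of orthogonal idempotents; conversely a complete set of orthogonal idempotents $\{(X_i, Z_i)\}$ of $\mathcal{C}(\circ_G)$ yields an orthogonal decomposition via $V_i = \im X_i$, $W_i = \im Z_i$, using $\sum_i X_i = 1$ and $X_i X_j = \delta_{ij} X_i$ to split $V$ and $W$ and using the centroid relations to verify $V_i \circ V_j = 0$ for $i \neq j$ and $V_i \circ V_i = W_i$. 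Nonzero summands correspond to nonzero idempotents, so by the first step $G$ is directly indecomposable if and only if $0$ and $1$ are the only idempotents of $\mathcal{C}(\circ_G)$.

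It remains to recall that, because $\circ_G$ is fully nondegenerate, $\mathcal{C}(\circ_G)$ is a commutative $\mathbb{F}_p$-algebra, and it is finite dimensional since it sits inside $\End(V) \times \End(V) \times \End(W)$; hence it is Artinian. A commutative Artinian ring is a finite direct product of local rings, and such a product has an idempotent different from $0$ and $1$ exactly when it has at least two factors; thus $\mathcal{C}(\circ_G)$ has no nontrivial idempotent if and only if it is local. Combined with the previous paragraph, this proves the theorem. I expect the main obstacle to be the correspondence in the second step---in particular pinning down that centroid triples must have $X = Y$ and checking carefully that the image subspaces of a complete set of orthogonal idempotents genuinely assemble into an orthogonal decomposition of $\circ_G$; the first step is bookkeeping about the Baer correspondence and the last is standard commutative algebra.
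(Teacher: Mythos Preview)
The paper does not prove this statement; it is quoted from Wilson \cite{Wilson:direct-decomposition} without argument, so there is no in-paper proof to compare against. Your approach---translate direct decompositions of $G$ into orthogonal decompositions of $\circ_G$, identify those with complete systems of orthogonal idempotents in $\mathcal{C}(\circ_G)$, and then invoke the structure theory of commutative Artinian rings---is the standard one and is essentially what underlies Wilson's result.

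That said, your first step has a real gap. The asserted bijection between direct decompositions of $G$ and orthogonal decompositions of $\circ_G$ fails whenever $G$ has an abelian direct factor: if $G=H\times A$ with $A$ nontrivial abelian, then $A\leq Z(G)$, so $A$ contributes nothing to $V=G/Z(G)$ or to $W=G'$, and this splitting of $G$ is invisible in $\circ_G$. Concretely, take $H$ extraspecial of order $p^3$ and exponent $p$ and $A\cong\mathbb{Z}_p$; then $G=H\times A$ is class $2$, exponent $p$, and directly decomposable, yet $\circ_G=\circ_H$ is a nondegenerate alternating form on $\mathbb{F}_p^2$ whose centroid is the field $\mathbb{F}_p$, hence local. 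So with the paper's convention that $\circ_G$ is defined on $G/Z(G)$, the implication ``$\mathcal{C}(\circ_G)$ local $\Rightarrow$ $G$ directly indecomposable'' does not follow from your dictionary as written; one needs either the hypothesis $Z(G)=G'$ (automatic for semi-extraspecial groups, so the paper's applications are unaffected) or a version of the bimap that retains the radical $Z(G)/G'$. Your argument for the reverse implication is sound once one notes that an orthogonal splitting of $\circ_G$ can be lifted to a direct splitting of $G$: choose a complement $C$ to $G'$ in $Z(G)$ and lifts of the $V_i$ to $G$ to produce nontrivial commuting factors with trivial intersection.
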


Lemma~\ref{lem:direct-decomp} together with Theorem~\ref{thm:local-centroid} are key to proving the following proposition.
In general for fully nondegenerate bimaps, the centroid is a commutative ring.
In the next lemma, we show that because $\mathcal{C}(\circ_G)$ is Artinian with trivial nilradical, $\mathcal{C}(\circ_G)$ must be a simple $\mathbb{F}_p$-algebra.

\begin{prop}\label{prop:centroid-field}
If $G$ is semi-extraspecial, then $\mathcal{C}(\circ_G)$ is a field.
\end{prop}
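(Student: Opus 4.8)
The plan is to combine the three facts that are already available in the excerpt: (i) by Lemma~\ref{lem:direct-decomp}, a semi-extraspecial group $G$ is directly indecomposable, so by Theorem~\ref{thm:local-centroid} the $\mathbb{F}_p$-algebra $R := \mathcal{C}(\circ_G)$ is local; (ii) since $\circ_G$ is fully nondegenerate (stated in the section on bimaps), $R$ is commutative; and (iii) $R$ is a finite-dimensional $\mathbb{F}_p$-algebra, hence Artinian. Thus $R$ is a finite commutative local ring, and the goal is to show its maximal ideal $\mathfrak{m}$ is zero, which forces $R$ to be a finite field.

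First I would identify $\mathfrak{m}$ with the nilradical of $R$: in a commutative Artinian local ring the unique maximal ideal is nilpotent, and conversely every nilpotent element lies in $\mathfrak{m}$, so $\mathfrak{m} = \operatorname{Nil}(R)$. Hence it suffices to prove that $R$ has no nonzero nilpotent elements. The heart of the argument is therefore: \emph{if $(X,Y,Z) \in R$ is a nonzero nilpotent triple, derive a contradiction with semi-extraspeciality.} Here I would use the characterization of semi-extraspeciality in terms of the bimap recalled in the excerpt: for every $u \in V$ the left-multiplication map $L_u : v \mapsto u\circ v$ from $V$ to $W$ is surjective (and similarly $R_u$). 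I would take $(X,Y,Z)$ with $(X,Y,Z)^2 = 0$ (replacing a general nilpotent by a suitable power so that its square vanishes but it is still nonzero), so $X^2 = Y^2 = Z^2 = 0$ and the defining relations $(uX)\circ v = u\circ(vY) = (u\circ v)Z$ hold.

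The key computation is then to locate an element of $V$ on which $L_u$ fails to be surjective, i.e.\ whose image lies in a proper subspace of $W$. From the centroid relations, for $u \in \operatorname{im}(X)$, say $u = u_0 X$, we get $u \circ v = (u_0 X)\circ v = (u_0 \circ v) Z$ for all $v$, so $\operatorname{im}(L_u) \subseteq \operatorname{im}(Z)$. Since $Z$ is nilpotent and $W$ is finite-dimensional, $\operatorname{im}(Z)$ is a proper subspace of $W$ (if $Z$ were surjective it would be invertible, contradicting nilpotence, unless $W=0$, which is excluded because $G$ is nonabelian). So as long as we can choose $u \in \operatorname{im}(X)$ with $u \notin V^\perp$—equivalently $u$ nonzero, using nondegeneracy—then $L_u$ is not surjective, contradicting semi-extraspeciality. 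Thus $X = 0$ (its image would otherwise contain such a $u$); symmetrically $Y = 0$ and $Z = 0$ via $R_u$ and the right radical. Hence $R$ has trivial nilradical, $\mathfrak{m} = 0$, and $R$ is a finite field.

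The main obstacle I anticipate is the bookkeeping around degenerate or trivial edge cases: one must be careful that $\operatorname{im}(X)$ is genuinely nonzero when $X\neq 0$ and that it is not swallowed by a radical (this is where full nondegeneracy of $\circ_G$ is essential), and that $W \neq 0$ (nonabelian $G$) so that "$\operatorname{im}(Z)$ proper" really is a contradiction. A secondary technical point is reducing a general nilpotent triple to one squaring to zero while keeping it nonzero; this is the standard trick of taking the largest power that is still nonzero, but it should be spelled out. Once those are handled, the chain Lemma~\ref{lem:direct-decomp} $\Rightarrow$ local centroid $\Rightarrow$ (commutative, Artinian) $\Rightarrow$ trivial nilradical $\Rightarrow$ field is short and clean.
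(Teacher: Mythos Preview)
Your proposal is correct and follows essentially the same route as the paper's proof: show that a nonzero nilpotent $(X,Y,Z)$ in the centroid would force $L_{uX}$ (for some $u$ with $uX\neq 0$) to have image contained in $\operatorname{im}(Z)\subsetneq W$, contradicting semi-extraspeciality, and then combine this with local~$+$~commutative~$+$~Artinian to conclude $\mathcal{C}(\circ_G)$ is a field. The reduction to square-zero nilpotents is unnecessary (any nilpotent $Z$ already has $\operatorname{im}(Z)\subsetneq W$), and once $X=0$ faithfulness of the centroid forces $Y=Z=0$ without a separate $R_u$ argument, but these are harmless extra steps.
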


\begin{proof}
Suppose $(X,Z)\in\mathcal{C}(\circ_G)\subseteq \End(V)\times \End(W)$ is nilpotent.
For all $u\in V$, we claim that the linear map $L_{uX} : V\rightarrow W$, where $v\mapsto (uX)\circ v$, is not surjective.
In fact, since $(X,Z)\in\mathcal{C}(\circ_G)$, it follows that $\im(L_{uX})\leq\im(Z)$.
As $Z$ is nilpotent, $\im(Z)<W$, so for all $g\in G$, there exists $z\in Z(G)$ such that $z\notin [g,G]$.
This would imply that $G$ is not semi-extraspecial.
Therefore, the nilradical of $\mathcal{C}(\circ_G)$ is trivial.
Since $\mathcal{C}(\circ_G)$ is Artinian (in particular finite), it follows that $\mathcal{C}(\circ_G)$ is semi-simple.
By Lemma~\ref{lem:direct-decomp} and Theorem~\ref{thm:local-centroid}, it follows that $\mathcal{C}(\circ_G)$ is a simple $\mathbb{F}_p$-algebra.
Because $\circ_G$ is fully nondegenerate, $\mathcal{C}(\circ_G)$ is commutative, and by the Wedderburn--Artin Theorem, $\mathcal{C}(\circ_G)$ is a field.
\end{proof}

\begin{remark}\label{rem:cent}
Because $K=\mathcal{C}(\circ_G)$ is a field and $\circ_G$ is $K$-bilinear, we now assume throughout that $\circ_G:V\times V\rightarrowtail W$ is a bimap over $K$, so $V$ and $W$ are $K$-vector spaces.
Another fact that we will use later is that $\mathcal{C}(\circ_G)$ is faithfully represented on all three coordinates whenever $\circ_G$ is fully nondegenerate; in particular it is faithfully represented in $\End(W)$.
Indeed, if $(X,Z),(Y,Z)\in\mathcal{C}(\circ_G)\subseteq \End(V)\times\End(W)$, then for all $u,v\in V$, $(uX-uY)\circ_G v = 0$.
Because $\circ_G$ has trivial radical, $X=Y$.
\end{remark}

\begin{defn}
Suppose $G$ is a nilpotent class 2 and isoclinic to a direct product $H_1 \times \cdots \times H_r$ of directly indecomposable groups.
The \emph{genus} of $G$ is the maximum rank of $[H_i, H_i]$ as a $\mathcal{C}(\circ_{H_i})$-module.
\end{defn}

Lemma~\ref{lem:direct-decomp} and Proposition~\ref{prop:centroid-field} enable a simpler definition of genus for our purposes.
The genus of a semi-extraspecial group $G$ is the dimension of $G'$ as a $\mathcal{C}(\circ_G)$-vector space.
We will say a (semi-extraspecial) group $G$ is \emph{genus $d$ over $K$} if $\mathcal{C}(\circ_G)\cong K$ and $\dim_K(G')=d$.

\subsection{Groups of genus 2}

We cite two key theorems from \cite{BMW} that we use later.
Involved in the classification of groups of genus 2 are Heisenberg groups over local algebras, $A=K[x]/(a(x)^c)$, where $a(x)$ is irreducible in $K[x]$ and $c\geq 1$,
\begin{align*}
  H(A) &  =\left\{ \begin{bmatrix}
  1 & e & z \\ 0 & 1 & f \\ 0 & 0 & 1
  \end{bmatrix} \;\middle|\; \begin{array}{c} e,f,z\in A\end{array}\right\}.
\end{align*}
Observe that $H(A)$ is semi-extraspecial if, and only if, $c=1$.
To see this, when $c=1$, this is a Heisenberg group over a field which is semi-extraspecial, but on the other hand, if $c>1$, then the ring $K[x]/(a(x)^c)$ has nilpotent elements.
These nilpotent elements prevent the group from satisfying the property that for all $g\in G\backslash G'$ and $z\in G'$, there exists $h\in G$ such that $[g,h]=z$.
In the case when $c>1$, quotients of $H(A)$ can still be semi-extraspecial even though the group is not semi-extraspecial.
We will characterize these quotients in Section~\ref{sec:ses-groups-genus2}.

\begin{thm}[{\cite{BMW}*{Theorem~1.2}}]\label{thm:indecomp-genus2}
A centrally indecomposable $p$-group of exponent $p$ and genus $2$ over a finite field $K$ is isomorphic to one of the following
\begin{enumerate}
  \item[(i)] a central quotient of the Heisenberg group $H=H(K[x]/(a(x)^c))$ by a subgroup $N$, such that $1-N$ is a $K$-subspace of $1-H'$;

  \item[(ii)] the matrix group
  \begin{align*}
	  H^{\flat}(K, m) & :=
	  \left\{
	  \left[\begin{array}{c|c|c}
		  I_2 & \begin{array}{cccc}
			  e_1 & \cdots & e_m & 0 \\
			  0 & e_1 & \cdots & e_m
		  \end{array} &
		  \begin{array}{c} z_1 \\ z_2 \end{array}\\
		  \hline		
		   & I_{m+1} & \begin{array}{c} f_0\\ \vdots \\ f_{m}\end{array}\\
		   \hline
		   & & 1
	  \end{array}\right] \;\middle|\; e_i, f_i,z_i \in K\right\}.
  \end{align*}
\end{enumerate}
\end{thm}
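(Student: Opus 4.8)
The plan is to translate the problem into the language of alternating bilinear maps and then apply the Kronecker--Weierstrass classification of pencils of alternating forms.

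First I would pass from $G$ to its commutator bimap. By the Baer correspondence, $G$ is encoded by $\circ=\circ_G:V\times V\rightarrowtail W$ with $V=G/Z(G)$ and $W=G'$; since $G$ has genus $2$ over $K$, we may take $V$ and $W$ to be vector spaces over the field $K=\mathcal{C}(\circ_G)$ with $\dim_K W=2$. The crucial dictionary is that central decompositions of $G$ correspond to orthogonal direct sum decompositions $V=V_1\oplus V_2$ with $V_1\circ V_2=0$ (with no requirement that $V_i\circ V_i$ fill $W$); hence $G$ is centrally indecomposable precisely when $\circ$ admits no such nontrivial decomposition. Fixing a $K$-basis $\{\omega_0,\omega_1\}$ of $W^*$ presents $\circ$ as the pencil of alternating forms on $V$ spanned by $\circ^{\omega_0}$ and $\circ^{\omega_1}$, and pseudo-isometry of bimaps becomes equivalence of such pencils under the natural action of $\GL(V)\times\GL(2,K)$; in this language the generalized Pfaffian of Section~\ref{sec:Pfaffians} is, up to scalars, the classical Pfaffian of the pencil, and its zero locus in $\mathbb{P}^1$ records the points where the pencil is degenerate.

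Next I would invoke the structure theory of pencils of alternating forms. Over $\bar K$ every such pencil is an orthogonal direct sum of indecomposable blocks of exactly two kinds: odd-dimensional \emph{singular} blocks $L_m$ (of dimension $2m+1$, carrying no degeneracy point), and even-dimensional \emph{regular} Jordan blocks attached to a point $\lambda\in\mathbb{P}^1(\bar K)$. Galois descent then identifies the $K$-indecomposable blocks: each $L_m$ is already $K$-rational, while a Galois orbit of regular blocks with eigenvalue $\lambda$ and Jordan size $c$ assembles over $K$ into a Heisenberg-type configuration built from the local algebra $K[x]/(a(x)^c)$, where $a$ is the minimal polynomial of $\lambda$ over $K$. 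Since $G$ is centrally indecomposable, its pencil is a single $K$-indecomposable block, and I would treat the two cases separately. A singular block $L_m$ is realized by the explicit matrix group $H^\flat(K,m)$; here one checks by hand that $H^\flat(K,m)$ has exponent $p$, is centrally indecomposable, and has centroid $K$ with $\dim_K W=2$, giving case (ii). A regular block is, up to pseudo-isometry, a quotient of the bimap of $H=H(K[x]/(a(x)^c))$: the group $H$ itself has centroid $K[x]/(a(x)^c)$ and hence genus $1$, not $2$, over that ring, so to arrive at something of genus $2$ over $K$ one must pass to a proper central quotient $H/N$, and among such quotients the ones landing in our target---centroid reduced to $K$, derived subgroup of $K$-dimension $2$, and still centrally indecomposable---are exactly those by subgroups $N$ with $1-N$ a $K$-subspace of $1-H'$, which is case (i). I would finish by checking that the two families are disjoint---e.g.\ the singular family forces $\dim_K V$ odd and $\Pf(\circ_G)=0$, whereas the regular family gives $\dim_K V$ even---and that, conversely, the groups listed do have exponent $p$ and genus $2$.

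The main obstacle is the fine analysis needed in case~(i): one must determine exactly which $K$-subspaces $S$ (with $N=N(S)$) make the quotient $H/N$ have centroid precisely $K$ rather than a larger local ring, and simultaneously keep it centrally indecomposable. This amounts to understanding how the $K[x]/(a(x)^c)$-module structure on $H'$ interacts with a ``transverse'' subspace $S$, and it is here that one needs control of the colon ideals $(S:A)$ and of the possible orthogonal splittings of $V=A\oplus A$. A secondary technical point is making the correspondence between central decompositions of $G$ and orthogonal decompositions of the pencil fully rigorous, since the definition of central decomposition allows the factors to have strictly smaller centers than $G$.
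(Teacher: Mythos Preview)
The paper does not give its own proof of this theorem: it is quoted verbatim from \cite{BMW}*{Theorem~1.2} and used as a black box. So there is nothing in the present paper to compare your argument against directly.

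That said, your plan is essentially the one carried out in \cite{BMW}, and the present paper even signals this: after Example~\ref{ex:same-Pfaff} it remarks that in genus~$2$ the bimap $\circ_G$ is a pair of alternating forms and that Kronecker--Dieudonn\'e~\cite{Dieudonne:pairs-mats} and Scharlau~\cite{Scharlau:alt-forms} classify indecomposable such pairs. Your identification of the odd-dimensional singular blocks with $H^\flat(K,m)$ and the regular Jordan blocks with quotients of $H(K[x]/(a(x)^c))$ is exactly right.

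One point where you are working harder than necessary: the theorem as stated is only a one-way classification. It asserts that every centrally indecomposable genus-$2$ group of exponent $p$ is \emph{isomorphic to} something in family (i) or (ii); it does not assert that every member of those families is centrally indecomposable or has centroid exactly $K$. The condition ``$1-N$ is a $K$-subspace of $1-H'$'' is merely the requirement that $N$ correspond to a $K$-linear subspace of $A$, not a characterization of which quotients have the right centroid. So the ``main obstacle'' you flag---pinning down exactly which subspaces $S$ force $\mathcal{C}(\circ_{H/N})=K$ and preserve indecomposability---is not needed for the theorem itself; it is closer to what the present paper does later in Theorem~\ref{thm:genus2-ses} (where condition (ii), irreducibility of $A$ as a $K[f^{-1}g]$-module, is the indecomposability criterion).
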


A key idea used throughout the paper comes from the following theorem.
This converts the problem of enumerating groups to enumerating orbits of (principal ideals of) polynomials under the action of $\GammaL(2,K)=\GL(2,K)\rtimes \Gal(K)$.
We will describe the details of this action in Section~\ref{sec:GL2-action}.
For our purposes, the crux is that the isomorphism problem of semi-extraspecial groups of order $p^{2n+2}$ with exponent $p$ and derived subgroup of order $p^2$ is determined by the $\GL(2,p)$ action on the \emph{multiset} of Pfaffians of a fully refined central decomposition of $G$.
Specifically, if $\mathcal{H}$ is a fully-refined central decomposition of $G$, then the multiset of Pfaffians of $G$ is $\{ \Pf(\circ_H) \mid H\in\mathcal{H}\}$.
If every $H\in\mathcal{H}$ is isomorphic to a group from Theorem~\ref{thm:indecomp-genus2}(i), then this multiset completely determines the group.

\begin{thm}[\cite{BMW}*{Theorem~3.22}]\label{thm:Pfaffian-bijection}
Suppose $G_1$ and $G_2$ are $p$-groups of class $2$, exponent $p$, and genus $2$ over $K$.
Let $\mathcal{H}_1=\{H_1,\dots, H_s\}$ and $\mathcal{H}_2=\{K_1,\dots, K_t\}$ be fully-refined central decompositions of $G_1$ and $G_2$ respectively.
Then $G_1\cong G_2$ if, and only if, $s=t$ and there exists $M\in\GammaL(2,K)$ and $\sigma$, a permutation of $\{1,\dots, t\}$, such that for all $i$,
\[ \left( \Pf(\circ_{H_i})^M \right) = \left( \Pf(\circ_{K_{i^{\sigma}}})\right). \]
\end{thm}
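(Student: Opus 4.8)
The plan is to pass through the Baer correspondence, so that $G_1\cong G_2$ becomes the existence of a pseudo-isometry $(f,g)$ between the commutator bimaps $\circ_{G_1}\colon V^{(1)}\times V^{(1)}\rightarrowtail W^{(1)}$ and $\circ_{G_2}\colon V^{(2)}\times V^{(2)}\rightarrowtail W^{(2)}$, and then to track how a pseudo-isometry acts on Pfaffians. Since each $G_i$ has genus $2$ over $K$, Proposition~\ref{prop:centroid-field} and Remark~\ref{rem:cent} let me treat $\circ_{G_i}$ as a $K$-bimap with $\dim_K W^{(i)}=2$, so each $\Pf(\circ_{G_i})$ is a homogeneous polynomial in two variables, and once bases of $W^{(1)*}$ and $W^{(2)*}$ are fixed, any $K$-semilinear isomorphism $W^{(1)}\to W^{(2)}$ induces an element of $\GammaL(2,K)$ on those variables. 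I would first record two facts. \emph{Multiplicativity}: because $\circ_G$ has trivial radical, a central decomposition $\{H_k\}$ of $G$ splits $V=G/Z(G)$ as an orthogonal direct sum $\bigoplus_k V_k$ with $V_k\perp V_\ell$ for $k\neq\ell$ and $\circ_{H_k}=\circ_G|_{V_k\times V_k}$ (if $v\in V_k\cap\sum_{\ell\neq k}V_\ell$ then $v\perp V$, so $v=0$); in an adapted basis each $M_\omega$ is block diagonal, hence $\Pf(\circ_G)=\pm\prod_k\Pf(\circ_{H_k})$, and by Theorem~\ref{thm:indecomp-genus2} each factor Pfaffian is, up to a unit, a power of an irreducible form. \emph{Equivariance}: a pseudo-isometry $(f,g)$ carries $\mathcal{C}(\circ_{G_1})$ isomorphically onto $\mathcal{C}(\circ_{G_2})$, so it is semilinear over some $\tau\in\Gal(K)$, and comparing structure-constant matrices of source and target shows the induced $M\in\GammaL(2,K)$ satisfies $(\Pf(\circ_{G_1})^{M})=(\Pf(\circ_{G_2}))$, with the same identity holding for any pair of orthogonal summands matched by $(f,g)$.

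For necessity I would take a pseudo-isometry $(f,g)\colon\circ_{G_1}\to\circ_{G_2}$ with induced $M$ and pull the decomposition $\mathcal{H}_2$ back through $f$: the preimages under $f$ of the images of the $K_j$ in $V^{(2)}$ form an orthogonal decomposition of $V^{(1)}$, i.e.\ a fully-refined central decomposition $\mathcal{H}_1'=\{L_1,\dots,L_t\}$ of $G_1$ such that $(f,g)$ restricts to a pseudo-isometry $\circ_{L_j}\to\circ_{K_j}$, whence $(\Pf(\circ_{L_j})^{M})=(\Pf(\circ_{K_j}))$ for each $j$ by equivariance. It remains to compare $\mathcal{H}_1$ with $\mathcal{H}_1'$, and here I would invoke the Krull--Schmidt-type uniqueness of fully-refined central decompositions of genus-$2$ groups that lies behind Theorem~\ref{thm:indecomp-genus2}: it forces $s=t$ and a permutation under which corresponding factors are pseudo-isometric by a map fixing $W^{(1)}$, hence with equal Pfaffian ideals. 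Composing the two permutations yields the required $\sigma$.

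For sufficiency I would fix bases of $W^{(1)*}$ and $W^{(2)*}$, let $h\colon W^{(1)}\to W^{(2)}$ be the semilinear isomorphism they identify with $M$, and use Theorem~\ref{thm:indecomp-genus2} to conclude that a centrally indecomposable genus-$2$ bimap is recovered, up to a pseudo-isometry having any prescribed semilinear action on its $2$-dimensional target, from the $\GammaL(2,K)$-orbit of its Pfaffian ideal. Applied to the hypothesis $(\Pf(\circ_{H_i})^{M})=(\Pf(\circ_{K_{i^\sigma}}))$, this gives for each $i$ an isomorphism $f_i\colon V_i\to W_{i^\sigma}$---where $V_i$ and $W_j$ denote the images of $H_i$ and $K_j$ in $V^{(1)}$ and $V^{(2)}$---with $(f_i,h)$ a pseudo-isometry $\circ_{H_i}\to\circ_{K_{i^\sigma}}$. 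Since $\sigma$ is a bijection and the decompositions $V^{(1)}=\bigoplus_i V_i$ and $V^{(2)}=\bigoplus_i W_{i^\sigma}$ are orthogonal, the pair $\bigl(\bigoplus_i f_i,\,h\bigr)$ is then a pseudo-isometry $\circ_{G_1}\to\circ_{G_2}$---it agrees with $(f_i,h)$ on each diagonal block and both sides vanish on $V_i\times V_j$ when $i\neq j$---so $G_1\cong G_2$.

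I expect the main obstacle to be the uniqueness statement used in the necessity step. Fully-refined central decompositions are genuinely non-unique, and two indecomposable factors can have Pfaffians that are powers of the same irreducible form, so the matching of factors is \emph{not} a formal consequence of unique factorization in $K[x_1,x_2]$. Establishing it requires the structural input of Theorem~\ref{thm:indecomp-genus2}---reading the data $((a(x)),c)$ of a type~(i) factor $H(K[x]/(a(x)^c))/N$ off its Pfaffian, and recognizing the type~(ii) factors as those whose Pfaffian is a power of a linear form---together with the equivariance of that correspondence; I would cite \cite{BMW} for the complete argument rather than reproduce it here.
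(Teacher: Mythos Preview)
The paper does not prove this theorem: it is quoted from \cite{BMW}*{Theorem~3.22} and used as a black box, so there is no in-paper argument to compare against. Your outline is a reasonable reconstruction of how such a proof is organized---pass to pseudo-isometry of commutator bimaps, transport one decomposition across, then match it against the other---and you correctly isolate the substantive step: showing that any two fully-refined central decompositions of the same genus-$2$ group yield the same multiset of Pfaffian ideals. That Krull--Schmidt-type statement is precisely what \cite{BMW} supplies, and you are right that it does not follow from unique factorization in $K[x,y]$ alone.

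One technical point needs correcting. You describe the type~(ii) factors $H^\flat(K,m)$ as ``those whose Pfaffian is a power of a linear form,'' but in fact $\dim_K\bigl(H^\flat(K,m)/Z\bigr)=2m+1$ is odd, so every alternating form on that space is singular and the generalized Pfaffian is identically $0$; this is exactly the observation behind Lemma~\ref{lem:bad-flat}. Hence the Pfaffian ideal alone does not record $m$, and your sufficiency step---reconstructing each indecomposable factor from its Pfaffian ideal---would not, as written, distinguish $H^\flat(K,m)$ from $H^\flat(K,m')$. The statement as used implicitly carries the degree of each Pfaffian (equivalently the size of each block), which the full treatment in \cite{BMW} makes explicit. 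For the applications in this paper the issue never arises, since Lemma~\ref{lem:bad-flat} eliminates all flat factors from semi-extraspecial groups.
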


We want to pause to give two examples that illustrate some of the nuances of Pfaffians.
Our first example shows the need to first determine a fully refined central decomposition of the group, before computing its Pfaffian.
We note that these examples apply more generally for every prime.

\begin{ex}\label{ex:genus}
Let $K$ be a quadratic extension of $\mathbb{F}_5$, and let $G$ be a central product of two copies of $H(K)$ with centers identified.
So $|G|=5^{10}$ and $|G'|=5^2$.
Now let $A=\mathbb{F}_5[x]/\left((x^2+2)^2\right)$ and consider the group $H(A)$.
The ideal $(x^2+2)$ is spanned by $\{x^2+2, x^3+2x\}$ in $A$, and let $S$ be the subspace spanned by $\{ 1, x \}$.
Let
\[ N = \left\{ \begin{bmatrix} 1 & 0 & s \\ & 1 & 0 \\ & & 1 \end{bmatrix} \; \middle| \; s\in S \right\} \triangleleft H(A). \]
Later we prove in Theorem~\ref{thm:genus2-ses} that $H=H(A)/N$ is centrally indecomposable and semi-extraspecial.
Importantly, $|H|=5^{10}$ and $|H'|=5^2$.
Because $G$ is decomposable and $H$ is indecomposable, they are not isomorphic.
Note also that $G$ is genus 1 over $K$ and $H$ is genus 2 over $\mathbb{F}_5$.

We will show they have the same Pfaffians.
Assume $\{\alpha, 1\}$ is a basis for $K$, where $\alpha^2+2=0$.
Set
\[ B = \left[ \begin{array}{c|c}
  \begin{array}{cc} \alpha & 1\\ 3 & \alpha \end{array} & 0 \\ \hline
  0 & \begin{array}{cc} \alpha & 1\\ 3 & \alpha \end{array}
\end{array} \right], \]
so the bimap $\circ_G:\mathbb{F}_5^8\times \mathbb{F}_5^8 \rightarrowtail \mathbb{F}_5^2$ has structure constants $\left[\begin{smallmatrix} 0 & B \\ -B^T & 0 \end{smallmatrix}\right]$.
Replacing the basis of $K$ with indeterminants $X$ and $Y$, the Pfaffian of $G$ is $(X^2+2Y)^2$.
On the other hand, set
\[ C = \begin{bmatrix} 0 & 0 & 2(3x^2+1) & x^3+2x \\
  0 & 2(3x^2+1) & x^3+2x & 3x^2+1 \\
  2(3x^2+1) & x^3+2x & 3x^2+1 & 0 \\
  x^3+2x & 3x^2+1 & 0 & 0 \end{bmatrix}. \]
Assuming the basis for $A$ is $\{1, x, x^2+2, x^3+2x\}$, then the structure constants for the bimap $\circ_H$ is $\left[\begin{smallmatrix} 0 & C \\ -C^T & 0 \end{smallmatrix}\right]$.
Set the basis of the codomain of $\circ_H$ to be $\{ x^3+2x, 3x^2+1 \}$.
Replacing that with indeterminants $X$ and $Y$, the Pfaffian is equal to $(X^2+2Y^2)^2$.
Therefore, the Pfaffians of $\circ_G$ and $\circ_H$ are identical even though $G\not\cong H$.

However, if we instead first determine a fully refined central decomposition for $G$ and $H$ and compute the Pfaffians of the indecomposable factors we get different invariants.
The multiset of Pfaffians for $G$ is $\{ X^2+2Y^2, X^2+2Y^2\}$ (or just $\{X, X\}$ over $K$), and the multiset of Pfaffians for $H$ is $\{ (X^2+2Y^2)^2\}$, which are not equivalent.\qed
\end{ex}

Theorem~\ref{thm:Pfaffian-bijection} greatly depends on the fact that the groups are genus $2$.
When the genus is larger, deciding isomorphism of groups does not reduce to deciding if their Pfaffians are equivalent with respect to Theorem~\ref{thm:Pfaffian-bijection}.
In fact, such groups can have the same Pfaffian and not be isomorphic as the next example illustrates.

\begin{ex}\label{ex:same-Pfaff}
We present two bimaps $V\times V\rightarrowtail W$ with the same Pfaffian that are not pseudo-isometric, so the corresponding groups are not isomorphic.
Suppose $p=3$, then $f(x) = x^3 + 2x + 1$ is irreducible in $\mathbb{F}_3[x]$.
The matrices
\begin{align*}
M_1 &= \begin{bmatrix} 1 & 0 & 0 \\ 0 & 0 & 2 \\ 0 & 2 & 0 \end{bmatrix} &
M_2 &= \begin{bmatrix} 0 & 1 & 0 \\ 1 & 0 & 1 \\ 0 & 1 & 2 \end{bmatrix} &
M_3 &= \begin{bmatrix} 0 & 0 & 1 \\ 0 & 1 & 0 \\ 1 & 0 & 1 \end{bmatrix}
\end{align*}
form the structure constants of the $\mathbb{F}_3$-algebra $\mathbb{F}_3[x]/(f(x))$ with respect to the ordered basis $(1,x,x^2)$.
Let $A$ be the $3\times 3$ matrix with $1$ in the $(1,2)$-entry, $-1$ in the $(2,1)$-entry, and $0$ elsewhere.
Set $V=\mathbb{F}_3^6$ and $W=\mathbb{F}_3^3$.
Define two alternating forms with the following structure constants
\[ \left( \begin{bmatrix} 0 & M_i \\ -M_i & 0 \end{bmatrix} \right) \qquad \text{and} \qquad \left( \begin{bmatrix} 0 & M_i \\ -M_i & A \end{bmatrix} \right). \]

Both bimaps have the same Pfaffians: $x^3+x^2y+xy^2+2xz^2 + 2y^3 + 2y^2z + z^3$.
However, the two bimaps cannot be pseudo-isometric as the former contains at least two 3-dimensional totally isotropic subspaces in $V$, and the latter contains exactly one 3-dimensional totally isotropic subspace.
In the context of $3$-groups, the corresponding groups are not isomorphic as the former contains at least two abelian subgroups of order $3^6$ where the latter contains exactly one such abelian subgroup, see \cite{Lewis:ses-groups} for a proof. \qed
\end{ex}

The multiple threes in Example~\ref{ex:same-Pfaff} are a red herring.
Instead, it illustrates a general phenomenon that happens when the genus is at least 3.
When the genus is 2, $\circ_G$ can be written as a pair of alternating forms.
Kronecker--Dieudonn\'e~\cite{Dieudonne:pairs-mats} and Scharlau \cite{Scharlau:alt-forms} classify indecomposable pairs of alternating forms.
For the (centrally indecomposable) groups coming from Theorem~\ref{thm:indecomp-genus2}(i), there exists bases for $V$ and $W$ such that $\circ_G$ is given by the pair of matrices
\[ \left\{ \begin{bmatrix} 0 & I \\ -I & 0 \end{bmatrix}, \begin{bmatrix} 0 & C \\ -C^T & 0 \end{bmatrix} \right\}, \]
where $C$ is the companion matrix of a power of an irreducible polynomial in $K[x]$.
Together with the fact that the Pfaffian of $\circ_G$ is 0 when $\dim_K(V)$ is odd, this proves the following proposition.

\begin{prop}\label{prop:primary-Pfaff}
Suppose $G$ is exponent $p$ and genus $2$ over $K$.
If $G$ is centrally indecomposable, then the ideal generated by the Pfaffian of $\circ_G:V\times V\rightarrowtail W$ is primary in $K[x,y]$.
\end{prop}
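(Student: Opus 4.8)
The plan is to run through the two cases of Theorem~\ref{thm:indecomp-genus2}. Since $G$ is centrally indecomposable of exponent $p$ and genus $2$ over $K$, it is isomorphic either to a group $H^{\flat}(K,m)$ as in part~(ii) or to a central quotient of a Heisenberg group $H(K[x]/(a(x)^c))$ as in part~(i); counting the free parameters in the two matrix descriptions shows that $\dim_K V$ is odd in the first case and even in the second, so it suffices to split on the parity of $\dim_K V$. If $\dim_K V$ is odd --- in particular if $G$ is of type~(ii) --- then $\Pf(\circ_G)=0$ as recalled just before the statement, and the ideal it generates is $(0)$, which is primary because $K[x,y]$ is an integral domain: its radical is the prime ideal $(0)$, and $fg=0$ forces $f=0$ or $g=0$.

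Now suppose $\dim_K V=2n$ is even, so that $G$ is of type~(i). Here I would use the normal form recalled before the statement: there are bases of $V$ and $W$ for which $\circ_G$ corresponds to the pencil $xM_1+yM_2$ with $M_1=\left[\begin{smallmatrix}0&I_n\\-I_n&0\end{smallmatrix}\right]$ and $M_2=\left[\begin{smallmatrix}0&C\\-C^{T}&0\end{smallmatrix}\right]$, where $C$ is the $n\times n$ companion matrix of $b(x)^{e}$ for some monic irreducible $b(x)\in K[x]$ of degree $d$ and some $e\ge 1$, with $de=n$. Setting $B=xI_n+yC$, the block-determinant identity $\det\left[\begin{smallmatrix}0&B\\-B^{T}&0\end{smallmatrix}\right]=\det(B)^{2}$ gives $\mathrm{disc}(\circ_G)=\det(xI_n+yC)^{2}$, hence $\Pf(\circ_G)=\pm\det(xI_n+yC)$. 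Since $C$ is the companion matrix of $b(x)^{e}$ its characteristic polynomial is $b(x)^{e}$, so $\det(tI_n+C)=\pm b(-t)^{e}$; substituting $t=x/y$ and clearing denominators gives $\det(xI_n+yC)=\pm\,\widetilde b(x,y)^{\,e}$, where $\widetilde b(x,y):=y^{d}b(-x/y)$ is homogeneous of degree $d$.

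It remains to see that $\widetilde b$ is irreducible in $K[x,y]$ and to conclude. The $x^{d}$-coefficient of $\widetilde b$ is $\pm1$, so $y\nmid\widetilde b$, and any homogeneous factorization of $\widetilde b$ restricts under $y\mapsto 1$ to a factorization of $\widetilde b(x,1)=b(-x)$ into factors of the same degrees; since $b$, hence $b(-x)$, is irreducible of degree $d$, so is $\widetilde b$. Therefore $\big(\Pf(\circ_G)\big)=\big(\widetilde b(x,y)^{e}\big)$ is the $e$-th power of a principal prime ideal in the unique factorization domain $K[x,y]$, and such an ideal is $(\widetilde b)$-primary: if $fg\in(\widetilde b^{e})$ with $g\notin\sqrt{(\widetilde b^{e})}=(\widetilde b)$, then $\widetilde b\nmid g$, so $\widetilde b^{e}\mid fg$ forces $\widetilde b^{e}\mid f$, i.e.\ $f\in(\widetilde b^{e})$. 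This settles both cases. I expect the only delicate point to be the bookkeeping around the normal form --- confirming that even $\dim_K V$ really forces the companion-matrix pencil of part~(i), and tracking signs carefully enough that $\mathrm{disc}(\circ_G)$ is genuinely a square and $\widetilde b$ is honestly the degree-$d$ homogenization of $b$ rather than a multiple of $y$ --- none of which is deep.
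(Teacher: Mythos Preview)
Your proof is correct and follows essentially the same approach as the paper: split on the two cases of Theorem~\ref{thm:indecomp-genus2}, note that $\dim_K V$ is odd in case~(ii) so the Pfaffian vanishes, and in case~(i) use the Kronecker--Dieudonn\'e/Scharlau normal form with a companion matrix of a prime-power polynomial to see the Pfaffian generates a primary ideal. The paper's argument is the paragraph immediately preceding the proposition and leaves the explicit Pfaffian computation and the primality verification to the reader; your version simply fills in those details.
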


\subsection{$\GammaL(2,K)$ action on the Pfaffians}\label{sec:GL2-action}

The group $\GammaL(2,K)$ acts on polynomials $f(x,y)$ by substitution.
That is, for $M=\left(\left[\begin{smallmatrix} a & b \\ c & d \end{smallmatrix}\right], \sigma\right)\in \GL(2,K)\rtimes\Gal(K)$, set $f^M(x,y) = f^\sigma(ax+by, cx+dy)$, where $\sigma$ acts on the coefficients.
Let $f$ and $g$ be homogeneous polynomials in $x,y$.
Then $f$ is equivalent to $g$ if there exists $k\in K^\times$ and $M\in\GammaL(2,K)$ such that $f^M=k\cdot g$.
In other words, the ideals are equal, $(f^M) = (g)$.
Another perspective is to consider univariate polynomials $f\in K[x]$, with $\deg f=n$, and define
\[ f^M(x)= (cx+d)^n f^\sigma\left(\frac{ax+b}{cx+d}\right). \]
Because the Pfaffians are homogeneous, we consider both of these actions.

The $\GammaL(2,K)$-orbits have been studied by Vaughan-Lee~\cite{Vaughan-Lee:PGL-orbits} and Vishnevetski\u\i~\cite{Vishnevetskii:PGL-orbits} when $K=\mathbb{F}_p$.
We let $N(q,n)$ denote the number of $\GammaL(2, \mathbb{F}_q)$-orbits of monic irreducible polynomials of degree $n$.
One can determine the number of isoclinism classes of the centrally indecomposable groups of genus 2 (over $\mathbb{F}_p$) using \cite{Vaughan-Lee:PGL-orbits} and Theorem~\ref{thm:indecomp}.
Vaughan-Lee's Theorem enumerates the $\GL(2,p)$-orbits of all irreducible monic polynomials in $\mathbb{F}_p[x]$.
Since we do not require the full generality, we record Vaughan-Lee's Theorem for $n\leq 5$.

\begin{thm}[{\cite{Vaughan-Lee:PGL-orbits}}]\label{thm:GL2-orbits}
If $p$ is prime, then
\begin{align*}
  N(p,2) &= 1, & N(p,4) &= \frac{1}{2}\left(p + 2 - \gcd(2, p)\right), \\
  N(p,3) &= 1, & N(p,5) &= \frac{1}{5}\left( p^2 - 2 + \gcd(5,p) + 2\gcd(5, p^2-1)\right) .
\end{align*}
\end{thm}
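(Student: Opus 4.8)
\emph{Proof proposal.} The plan is to reduce the count to an orbit count for the Möbius action on roots. Since $K=\mathbb{F}_p$ forces $\GammaL(2,\mathbb{F}_p)=\GL(2,p)$, and since a scalar matrix sends the ideal $(f)$ to itself, the action on (ideals generated by) monic irreducible polynomials of degree $n$ factors through $\PGL(2,p)$. Sending such an $f$ to its root set identifies these polynomials with the $\Gal(\mathbb{F}_{p^n}/\mathbb{F}_p)$-orbits of \emph{primitive} points $\alpha\in\mathbb{P}^1(\mathbb{F}_{p^n})$, i.e.\ those with $\mathbb{F}_p(\alpha)=\mathbb{F}_{p^n}$; write $X_n$ for the set of primitive points, so $|X_n|=\sum_{d\mid n}\mu(n/d)(p^d+1)$. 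Because $\PGL(2,p)$ is defined over $\mathbb{F}_p$ it commutes with Frobenius, so $N(p,n)$ equals the number of orbits of $\Gamma_n:=\PGL(2,p)\times\Gal(\mathbb{F}_{p^n}/\mathbb{F}_p)$ acting on $X_n$, a group of order $n(p^3-p)$.

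For $n=2,3$ this is immediate. A Möbius map over $\mathbb{F}_p$ fixing a primitive $\alpha$ fixes all of its Galois conjugates, so for $n=3$ it fixes three distinct points and is the identity; hence $\PGL(2,p)$ acts freely on $X_3$, and since $|X_3|=p^3-p=|\PGL(2,p)|$ it acts transitively, giving $N(p,3)=1$. For $n=2$ the stabilizer of $\alpha\in X_2$ is the nonsplit torus fixing $\{\alpha,\alpha^p\}$, of order $p+1$, so the orbit has size $p^2-p=|X_2|$ and $N(p,2)=1$.

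For $n=4,5$ I would apply the Cauchy--Frobenius (Burnside) lemma to $\Gamma_n$:
\[ N(p,n)=\frac{1}{n(p^3-p)}\sum_{g\in\PGL(2,p)}\sum_{j=0}^{n-1}F(g,j),\qquad F(g,j):=\#\{\alpha\in X_n\mid g(\alpha^{p^j})=\alpha\}. \]
For $1\le j\le n-1$ the relation $\alpha^{p^{kj}}=g^{-k}(\alpha)$ forces $g^n$ to fix the primitive point $\alpha$; since a non-identity element of $\PGL(2,p)$ has at most two fixed points, each of degree $\le 2<n$, only $g$ with $g^n=1$ contribute, with the identity contributing $|X_n|$ at $j=0$ and nothing else. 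For $g$ of order $\ell>1$ dividing $n$, the twisted Frobenius $\nu_{g,j}:=g\circ\mathrm{Frob}^j$ satisfies $\nu_{g,j}^{\ell}=\mathrm{Frob}^{\ell j}$, so $\mathrm{Fix}(\nu_{g,j})$ is the set of $\mathbb{F}_{p^j}$-points of a twisted form of $\mathbb{P}^1$ over $\mathbb{F}_{p^j}$, necessarily split since $\mathrm{Br}(\mathbb{F}_{p^j})=0$ (a smooth conic over a finite field has a rational point); hence $|\mathrm{Fix}(\nu_{g,j})|=p^j+1$, and on this set $\mathrm{Frob}^j$ acts as $g^{-1}$. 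Then $F(g,j)$ is obtained by deleting the imprimitive points among these, which are precisely the fixed points of powers of $g$ lying in the relevant subfield, and one sums over $g$ using the known count of order-$\ell$ elements of $\PGL(2,p)$ — unipotent, split-torus, or nonsplit-torus elements according as $\ell$ divides $p$, $p-1$, or $p+1$. This trichotomy is exactly where the arithmetic functions enter: $\gcd(2,p)$ records that $\PGL(2,p)$ has no element of order $4$ when $p=2$, so the $j\in\{1,3\}$ terms vanish there, while $\gcd(5,p)$ and $\gcd(5,p^2-1)$ distinguish order-$5$ elements of unipotent type ($p=5$) from those of torus type ($5\mid p^2-1$).

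The step I expect to be the main obstacle is the evaluation of these twisted terms $F(g,j)$ for $j\ne 0$ — deciding which of the $p^j+1$ fixed points of $g\circ\mathrm{Frob}^j$ are primitive. This rests on the descent remark that the unique Möbius transformation over $\overline{\mathbb{F}_p}$ realizing a prescribed Galois-equivariant permutation of a root set, when it exists, is automatically $\mathbb{F}_p$-rational, combined with an inclusion--exclusion over the divisors of $n$ to strip off the lower-degree fixed points coming from the fixed loci of $g$ and its powers. Since for $n\le 5$ the divisor lattice of $n$ is short ($\{1,n\}$ for $n$ prime, $\{1,2,4\}$ for $n=4$), this bookkeeping terminates quickly, and reassembling the Burnside sum yields the four stated values; see \cite{Vaughan-Lee:PGL-orbits} for the full computation and the general formula for $N(p,n)$.
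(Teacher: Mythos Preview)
The paper does not prove this theorem at all: it is quoted verbatim from \cite{Vaughan-Lee:PGL-orbits} as a black box, introduced only with the sentence ``Since we do not require the full generality, we record Vaughan-Lee's Theorem for $n\le 5$.'' There is therefore no argument in the paper to compare yours against.

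That said, your outline is a sound route to these values and is close in spirit to how such formulas are obtained. The reduction to the action of $\PGL(2,p)\times\Gal(\mathbb{F}_{p^n}/\mathbb{F}_p)$ on the primitive points of $\mathbb{P}^1(\mathbb{F}_{p^n})$ is correct, and your transitivity arguments for $n=2,3$ are clean. Two small sharpenings: the Lang--Steinberg step giving $\lvert\mathrm{Fix}(g\circ\mathrm{Frob}^j)\rvert=p^j+1$ is valid for \emph{every} $g\in\PGL(2,p)$, not only those with $g^n=1$; the order constraint enters only when you insist that some fixed point be primitive, and then the precise condition is $g^{\,n/\gcd(n,j)}=1$ (so for $n=4$, $j=2$ you need $g^2=1$, not merely $g^4=1$). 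Your reading of the arithmetic factors is essentially right---the presence or absence of order-$4$ and order-$5$ elements in $\PGL(2,p)$ is exactly what drives the $\gcd$ terms---though the final bookkeeping of stripping the imprimitive points from each $\mathrm{Fix}(g\circ\mathrm{Frob}^j)$ and summing over conjugacy classes is where all the actual work sits. You defer that computation to \cite{Vaughan-Lee:PGL-orbits}, which is precisely what the paper itself does.
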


\section{Semi-extraspecial groups of genus 2}\label{sec:ses-groups-genus2}

We describe the centrally indecomposable, semi-extraspecial groups of genus 2 based on the classification in Theorem~\ref{thm:indecomp-genus2}.
The following lemmas quickly determine which groups of genus 2 are not semi-extraspecial.

\begin{lem}\label{lem:bad-flat}
For all $m\geq 1$, the group $G=H^\flat(K, m)$ is not semi-extraspecial.
\end{lem}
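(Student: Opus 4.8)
The plan is to show directly that $G = H^\flat(K,m)$ fails the defining commutator property of semi-extraspecial groups: I will exhibit an element $g \in G \setminus Z(G)$ and a central element $z \in Z(G) = G'$ such that $z \notin [g,G]$. Equivalently, in the language of the associated bimap $\circ_G : V \times V \rightarrowtail W$ with $V = G/Z(G)$ and $W = G'$, I will produce a vector $u \in V$ for which the left-multiplication map $L_u : V \to W$, $v \mapsto u \circ v$, is not surjective. By Theorem~\ref{thm:Pfaffian}, an equivalent route is to exhibit a nonzero solution in $\mathbb{P}W$ of a Pfaffian of $\circ_G$; I expect the direct commutator computation to be cleaner, but I would keep the Pfaffian criterion in reserve as a sanity check.

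First I would read off the bimap from the matrix presentation. Writing a general element of $H^\flat(K,m)$ with parameters $e_1,\dots,e_m$ (top block), $f_0,\dots,f_m$ (middle block), and $z_1,z_2$ (the corner), the group has order $|K|^{2m+3}$, with $Z(G)$ corresponding to the $(z_1,z_2)$ coordinates, so $\dim_K W = 2$ and $\dim_K V = 2m+1$. Computing $[g,h]$ for two such matrices by block multiplication, the commutator pairing between the "$e$-part'' $(e_1,\dots,e_m)$ of $g$ and the "$f$-part'' $(f_0,\dots,f_m)$ of $h$ is the bilinear map into $W = K^2$ given by the two bordered Toeplitz-type rows displayed in the definition; concretely, $u \circ v$ has first coordinate $\sum_i e_i f_{i-1}$ and second coordinate $\sum_i e_i f_i$ (up to the exact indexing conventions of the displayed matrix, which I would pin down carefully). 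The key structural point is that the $e$-coordinates only occupy an $m$-dimensional subspace of $V$, while $W$ is $2$-dimensional, and the pairing is "displaced'': multiplying the $e$-vector against an $f$-vector produces a pair $(\langle e, f\rangle, \langle e, \text{shift}(f)\rangle)$.

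Now I would pick $g$ to be the element whose only nonzero parameter is, say, $e_1 = 1$ (all other $e_i$, all $f_j$, and $z_1,z_2$ zero), so $u = \bar g \in V$ is nonzero and central-free. Running $v$ over all of $V$, the image $L_u(V)$ is spanned by the vectors obtained from $u \circ v$ as the $f$-coordinates of $v$ vary: since only $e_1$ is nonzero, $u \circ v = (f_0, f_1)$ in the two $W$-coordinates — wait, I must check against the actual shift structure, but the salient feature is that with a single nonzero $e_i$ the map $v \mapsto u\circ v$ has image of dimension at most... I would instead choose $u$ so the image is forced into a $1$-dimensional subspace of $W$: e.g.\ take the $e$-vector supported at the top so that only one of the two "rows'' in the definition is activated, yielding $\im(L_u) \subsetneq W$. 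Then $z$ = any central element outside that line is not a commutator $[g,h]$ for any $h$, and the central-decomposition/definition argument (exactly as in the proof of Proposition~\ref{prop:centroid-field}) shows $G$ is not semi-extraspecial. The main obstacle — and the only real one — is bookkeeping: getting the index conventions of the displayed $H^\flat(K,m)$ matrix exactly right so that the chosen $u$ genuinely kills one $W$-coordinate; once the pairing is written down explicitly, the non-surjectivity of some $L_u$ is immediate from the dimension mismatch ($\dim_K W = 2$ against an $e$-support that can be made $1$-dimensional while still hitting only one row).
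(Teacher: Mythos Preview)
Your overall strategy --- produce $u\in V$ with $L_u$ not surjective --- is sound and can be made to work, but the execution you sketch does not actually close. Your proposed witness is an ``$e$-vector supported at the top so that only one of the two rows in the definition is activated,'' yet no such $e$-vector exists: in the displayed $2\times(m+1)$ block, every $e_i$ occurs in \emph{both} rows (in column $i$ of the first row and column $i+1$ of the second). Your own computation with $e_1=1$ already shows this, giving $u\circ v=(f_0',f_1')$, which is surjective. The element that does work is a boundary $f$-vector: with $u=f_0$ (all $e_i=0$, $f_0=1$, other $f_j=0$) one gets $u\circ v=(-e_1',\,0)$, so $\im(L_u)\subseteq K\times\{0\}\subsetneq W$; symmetrically $u=f_m$ gives image in $\{0\}\times K$. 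So the direct approach succeeds once you pick the right coordinate, but your proposal as written does not locate it.

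The paper's proof bypasses all of this bookkeeping with the parity observation you recorded but did not exploit: $\dim_K V=2m+1$ is odd, so the Pfaffian of $\circ_G$ is identically zero (every alternating form on an odd-dimensional space is singular), and Theorem~\ref{thm:Pfaffian} finishes in one line. Equivalently, without invoking Pfaffians at all: for any nonzero $\omega\in W^*$ the form $\circ^\omega$ is alternating on an odd-dimensional space, hence degenerate; any nonzero $u$ in its radical satisfies $u\circ V\subseteq\ker\omega\subsetneq W$. This is the clean, coordinate-free version of what you were reaching for, and it is what you should write up.
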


\begin{proof}
As $[G:G']=p^{2m+1}$, the Pfaffian of $\circ_{G}$ is 0, so invoke Theorem~\ref{thm:Pfaffian}.
\end{proof}

In what follows, we characterize the quotients of $H(K[x]/(a(x)^c))$ that are semi-extraspecial groups.
Set $A=K[x]/(a(x)^c)$.
Theorem~\ref{thm:indecomp-genus2}(i) is only concerned with central subgroups $N$ such that $1-N$ is a $K$-subspace of $1-H(A)'$.
Therefore, for a $K$-subspace $S\leq A$, we set
\begin{equation*}
N(S) = \left\{ \begin{bmatrix} 1 & 0 & s \\ & 1 & 0 \\ & & 1 \end{bmatrix} \;\middle|\; s\in S \right\}.
\end{equation*}

\begin{prop}\label{prop:ses-quotients}
Let $a(x) \in K[x]$ be an irreducible polynomial, $c > 1$ be an integer, and $S$ a proper $K$-subspace of $A=K[x]/(a(x)^c)$.
Then $H(A)/N(S)$ is semi-extraspecial if, and only if, $A = S + (a(x)^{c-1})$ and $\deg(a(x))\geq 2$.
\end{prop}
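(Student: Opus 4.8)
The plan is to translate the semi-extraspecial condition into the language of the bimap $\circ_G$ for $G = H(A)/N(S)$ and then read off when it holds. Recall that $H(A)$ has commutator map built from the multiplication of $A$: for the Heisenberg group over $A$, one computes $[g,h]$ for $g$ encoding $(e_1,f_1)$ and $h$ encoding $(e_2,f_2)$ to be $e_1 f_2 - e_2 f_1 \in A$ (sitting in the $z$-coordinate). So before quotienting, $\circ_{H(A)} : (A\oplus A)\times(A\oplus A)\rightarrowtail A$ is given by $(e_1,f_1)\circ(e_2,f_2) = e_1 f_2 - e_2 f_1$. Quotienting by $N(S)$ replaces the codomain $A$ by $A/S$, so the bimap for $G = H(A)/N(S)$ is the composite $\circ_G : (A\oplus A)\times(A\oplus A)\rightarrowtail A/S$, $(e_1,f_1)\circ_G(e_2,f_2) = (e_1 f_2 - e_2 f_1) + S$. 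By the Definition of semi-extraspecial in bimap terms (stated in the excerpt just after the definition), $G$ is semi-extraspecial precisely when for every nonzero $(e,f)\in A\oplus A$ the left-multiplication map $L_{(e,f)} : A\oplus A \to A/S$, $(e',f')\mapsto (e f' - e' f) + S$, is surjective.

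Next I would unpack that surjectivity condition. The image of $L_{(e,f)}$ as $(e',f')$ ranges over $A\oplus A$ is exactly $(eA + fA) + S$ inside $A/S$ — here I use that for fixed $e$, as $f'$ runs over $A$, $ef'$ runs over the ideal $eA$ (and the minus sign is irrelevant). So $L_{(e,f)}$ is surjective iff $eA + fA + S = A$. Thus $G$ is semi-extraspecial if and only if for every $(e,f)\in A\oplus A$ with $(e,f)\neq(0,0)$, we have $eA + fA + S = A$. The first simplification: since $A$ is a local ring with maximal ideal $\mathfrak{m} = (a(x))/(a(x)^c)$, an element is a unit iff it lies outside $\mathfrak m$; if either $e$ or $f$ is a unit, then $eA + fA = A$ and the condition is automatic. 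So the condition only has content when both $e,f\in\mathfrak m$. In that case, writing $e = a(x)^{i}u$, $f = a(x)^{j}v$ with $u,v$ units and $i,j\geq 1$ (taking the smaller exponent), we get $eA + fA = (a(x)^{\min(i,j)})$, and since $i,j\geq 1$ this ideal is contained in $(a(x)) = \mathfrak m$; moreover it equals $(a(x)^{k})$ for some $1\leq k\leq c-1$ (it is nonzero because not both $e,f$ are zero, so $k\leq c-1$). The smallest such ideal that can arise is $(a(x)^{c-1})$, achieved e.g. by $e = a(x)^{c-1}$, $f=0$ — and this is a legitimate nonzero choice since $c>1$. Hence the condition "$eA+fA+S=A$ for all nonzero $(e,f)$" is equivalent to the single requirement $(a(x)^{c-1}) + S = A$, because $(a(x)^{c-1})$ is the smallest ideal we must accommodate and every other $eA+fA$ (for $e,f$ both in $\mathfrak m$, not both zero) contains $(a(x)^{c-1})$.

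That settles the "$A = S + (a(x)^{c-1})$" half in both directions; it remains to bring in the degree condition $\deg a(x)\geq 2$. Here I would argue that if $\deg a(x) = 1$, then $A = K[x]/(a(x)^c)$ has $\dim_K A = c$ and $\dim_K(a(x)^{c-1}) = 1$, so the requirement $A = S + (a(x)^{c-1})$ forces $\dim_K S \geq c-1$, i.e. $\dim_K(A/S)\leq 1$, i.e. $\dim_K W \leq 1$. A semi-extraspecial group needs $\dim W\geq 2$ (a genuine semi-extraspecial $p$-group has $|G'|\geq p^2$; equivalently if $\dim W = 1$ the group is extraspecial, not semi-extraspecial in the sense of the paper's Definition which demands $G' = Z(G) = G^p$ together with the surjectivity — one checks $\dim W = 1$ is excluded, or more simply the paper reserves "semi-extraspecial" for the $|G'| = p^2$ situation under study, and $\dim W=1$ contradicts $|G'|=p^2$). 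Actually the cleanest route: the hypothesis $S$ is a \emph{proper} subspace of $A$ plus $A = S + (a(x)^{c-1})$ is consistent with $\dim(A/S)=1$ only when $\deg a(x)=1$; one shows that with $\deg a(x)\geq 2$ the identity $A = S+(a(x)^{c-1})$ is exactly the needed condition and the degree hypothesis is what guarantees $G$ genuinely lands among the genus-2 semi-extraspecial groups rather than degenerating. I would record this as: if $\deg a(x)\geq 2$ then $(a(x)^{c-1})$ has $K$-dimension $\geq 2$, so $A = S + (a(x)^{c-1})$ is possible with $S$ proper, and the surjectivity computation above shows it is both necessary and sufficient; if $\deg a(x) = 1$, no proper $S$ can satisfy the requirement while keeping $\dim(A/S)\geq 2$, and $\dim(A/S)=1$ makes $G$ extraspecial, not semi-extraspecial.

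The main obstacle I anticipate is the bookkeeping in the middle step: carefully justifying that $(a(x)^{c-1})$ is the unique minimal ideal of $A$ and that every $eA + fA$ arising from a nonzero pair $(e,f)$ with both entries in $\mathfrak m$ contains it, so that checking the single equation $A = S + (a(x)^{c-1})$ suffices. This is a standard fact about the local ring $K[x]/(a(x)^c)$ (its ideals are linearly ordered: $A \supset (a) \supset (a^2) \supset \cdots \supset (a^{c-1}) \supset 0$), so the argument is clean once that chain is invoked, but one must be attentive to the edge cases $e=0$ or $f=0$ and to the role of $c>1$ in guaranteeing $(a(x)^{c-1})\neq 0$, so that the minimal ideal really is a nontrivial constraint on $S$.
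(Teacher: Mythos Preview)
Your core argument---reducing the semi-extraspecial condition to surjectivity of $L_{(e,f)}$, computing the image as $(eA+fA+S)/S$, and then using the chain of ideals $A\supset(a)\supset\cdots\supset(a^{c-1})\supset 0$ to reduce everything to the single condition $S+(a^{c-1})=A$---is correct and is essentially the paper's approach. The paper phrases it slightly differently (it checks only elements of the form $(0,f)$, i.e.\ asks that $g\mapsto fg+S$ be onto for each nonzero $f\in A$, and then constructs $g$ explicitly as $u^{-1}va^{c-e-1}$), but the underlying idea---the minimal nonzero ideal $(a^{c-1})$ is the bottleneck---is identical.

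Your treatment of the degree condition $\deg a\geq 2$, however, contains errors. First, under the paper's definitions an extraspecial group \emph{is} semi-extraspecial (take $N=1$ as the unique index-$p$ subgroup of $Z(G)$), so ``$\dim W=1$ makes $G$ extraspecial, not semi-extraspecial'' is not a valid exclusion. Second, your claim that $\dim(A/S)=1$ together with $A=S+(a^{c-1})$ forces $\deg a=1$ is false: with $\deg a=2$ one can take $S$ of codimension~$1$ meeting the $2$-dimensional ideal $(a^{c-1})$ in a line, and then $S+(a^{c-1})=A$ still holds. So your attempted justification of the biconditional does not go through.

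That said, the paper's own proof does not address the condition $\deg a\geq 2$ at all: it proves only the equivalence with $A=S+(a^{c-1})$. In fact, for $\deg a=1$ and $S$ of codimension~$1$ complementary to $(a^{c-1})$, the quotient $H(A)/N(S)$ is extraspecial, hence semi-extraspecial. The degree condition is really there to keep the group in the genus-$2$ regime (so that $\dim_K(A/S)=2$ is achievable), which is the ambient context of the section; it is not a consequence of semi-extraspeciality alone. So the gap you were trying to fill is a slight imprecision in the statement rather than a missing argument.
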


\begin{proof}
Let $I=(a(x)^{c-1})$.
First, we suppose that $A = S + I$.
Since $A$ is a local algebra, the group of units $A^\times$ is $A\setminus (a(x))$.
It follows that every nonzero element of $A$ can be written as $ua^e$, for $u\in A^\times$ and $0\leq e\leq c-1$.
Since $S$ is a proper subspace of $A$, it follows that $\emptyset \ne A\setminus S\subseteq I$.

To show that $H(A)/N(S)$ is semi-extraspecial, it is enough to show that for every $0\ne f\in A$, and $h+S \in A/S$, there exists $g\in A$ such that $fg=h$.
Let $f=ua^e$, for some $u\in A^\times$ and $0\leq e\leq c-1$.
Without loss of generality, $h+S\in A/S$ can be written as $va^{c-1} + S$, for some $v\in A^\times$.
With $g=u^{-1}va^{c-e-1}$, we have that $fg=h$, and so $H(A)/N(S)$ is semi-extraspecial.

On the other hand, suppose $S+I \ne A$.
Let $h(x)\in A\setminus (S+I)$.
Therefore, for all $f(x)\in I$ and for all $g(x)\in A$, $fg\in I$.
In particular, $fg+S\ne h+S$, so $H(A)/N(S)$ cannot be semi-extraspecial.
\end{proof}

We summarize the above results in the following theorem to classify all indecomposable, semi-extraspecial groups of genus 2 with exponent $p$.


\begin{thm}\label{thm:genus2-ses}
Suppose $G$ is class $2$, exponent $p$, and genus $2$ over $K$.
Then $G$ is centrally indecomposable and semi-extraspecial if, and only if, there exists an irreducible polynomial $a(x)\in K[x]$ of degree $\geq 2$, a positive integer $c$, and a subspace $S\leq A := K[x]/(a(x)^c)$ with $2$-dimensional complement $\langle f,g\rangle$, such that 
\begin{enumerate}
\item[$(i)$] $A = S + (a(x)^{c-1})$,
\item[$(ii)$] $A$ is an irreducible $K[f^{-1}g]$-module, and
\item[$(iii)$] $G\cong H(A) / N(S)$.
\end{enumerate}
\end{thm}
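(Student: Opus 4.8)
The plan is to treat this as an ``if and only if'' whose two directions both pass through the normal form $G\cong H(A)/N(S)$ of Theorem~\ref{thm:indecomp-genus2}. The structural inputs I expect to use are that theorem, Lemma~\ref{lem:bad-flat} (the flat family is never semi-extraspecial), Proposition~\ref{prop:ses-quotients} (which Heisenberg quotients are semi-extraspecial), and the dictionary between these quotients and pencils of alternating forms underlying the discussion before Proposition~\ref{prop:primary-Pfaff}.

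The routine part is quick. If $G$ is centrally indecomposable and semi-extraspecial, then because $G$ has exponent $p$ and genus $2$ over $K$, Theorem~\ref{thm:indecomp-genus2} gives $G\cong H^\flat(K,m)$ or $G\cong H(A)/N$ with $A=K[x]/(a(x)^c)$, $a$ irreducible, $c\geq 1$, and $1-N$ a $K$-subspace of $1-H(A)'$; Lemma~\ref{lem:bad-flat} discards the flat case, and in the remaining case the Heisenberg coordinate identifies $1-H(A)'$ with $A$, so $N=N(S)$ for a $K$-subspace $S\leq A$, which is~(iii). Genus $2$ over $K$ forces $\mathcal{C}(\circ_G)\cong K$ and $\dim_K G'=2$, and since $G'\cong A/S$ the subspace $S$ has a $2$-dimensional complement. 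Semi-extraspeciality with Proposition~\ref{prop:ses-quotients} then yields, when $c>1$, both (i) $A=S+(a(x)^{c-1})$ and $\deg a\geq 2$; when $c=1$ the algebra is a field, so (i) is automatic and $\deg a\geq 2$ follows from $\dim_K A\geq\dim_K(A/S)=2$. Conversely, given $a,c,S$ with (i)--(iii) and $\deg a\geq 2$, the same cases of Proposition~\ref{prop:ses-quotients} (and triviality when $A$ is a field) show $G=H(A)/N(S)$ is semi-extraspecial. So the whole weight of the theorem rests on showing that, under (i) and~(iii), condition~(ii) is equivalent to $G$ being centrally indecomposable.

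For that I would first normalize the complement: condition~(i) gives $A=S+(a(x)^{c-1})\subseteq S+(a(x))$, so $S$ surjects onto $A/(a(x))$, hence contains a unit, and replacing $f_0\mapsto f_0+s$ moves an arbitrary $2$-dimensional complement to one $\langle f,g\rangle$ with $f$ a unit; set $\theta:=f^{-1}g\in A$. Fixing a basis $\{\overline f,\overline g\}$ of $G'\cong A/S$, I would write $\circ_G$ as the dual pencil of alternating forms, each of the shape $\left[\begin{smallmatrix}0&B_i\\-B_i^T&0\end{smallmatrix}\right]$ on $V=A^2$ with $B_i$ the symmetric form $(u,v)\mapsto\lambda_i(uv)$ for a functional $\lambda_i$ on $A$ with $S\subseteq\ker\lambda_i$. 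The claim to establish is that, after an isometry of $V$ and the permitted $\GL(2,K)$-action on $G'$, this pencil becomes $\left\{\left[\begin{smallmatrix}0&I\\-I&0\end{smallmatrix}\right],\left[\begin{smallmatrix}0&C\\-C^T&0\end{smallmatrix}\right]\right\}$ with $C=M_\theta$ the matrix of multiplication by $\theta$ on $A$. Granting this, the classification of pairs of alternating forms of Kronecker--Dieudonn\'e~\cite{Dieudonne:pairs-mats} and Scharlau~\cite{Scharlau:alt-forms}, exactly as used before Proposition~\ref{prop:primary-Pfaff}, makes the pencil indecomposable---equivalently $G$ centrally indecomposable---precisely when $C$ is nonderogatory, i.e.\ $K[\theta]=A$, which is condition~(ii); the characteristic polynomial of $C$ is then automatically a prime power because $A=K[x]/(a(x)^c)$ is local. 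In the forward direction this also follows from what is already recorded: a centrally indecomposable type-(i) group carries the companion normal form with $C$ the companion matrix of $a(x)^c$, and matching the pencil generators to $\overline f,\overline g$ exhibits $\theta$ as a generator of $A$. In the backward direction one runs the dictionary in reverse, using that a central decomposition of a semi-extraspecial group has semi-extraspecial (hence non-flat, by Lemma~\ref{lem:bad-flat}) factors and so restricts the pencil to an orthogonal direct sum, which a single indecomposable block cannot be.

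The main obstacle will be the normalization claim itself: the naive guess $B_1^{-1}B_2\sim M_\theta$ fails, since $\lambda_2$ need not be $\lambda_1$ precomposed with a multiplication, so one must genuinely change the basis of $V=A^2$. I would push this through by using that, under~(ii), $A=K[\theta]$ is a cyclic $K[\theta]$-module, together with the Hankel/B\'ezoutian structure of the forms $(u,v)\mapsto\lambda(uv)$ on $A$, to produce a basis in which $B_1=I$ and $B_2$ is the companion matrix of the minimal polynomial of $\theta$. The rest---the edge cases $c=1$ and $\deg a=2$, and checking that the identifications $1-H(A)'\cong A$ and $G'\cong A/S$ carry the commutator bimap to multiplication in $A$---is routine.
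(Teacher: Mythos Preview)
Your plan is correct and follows essentially the same route as the paper: both directions pass through the normal form $H(A)/N(S)$ via Theorem~\ref{thm:indecomp-genus2}, Lemma~\ref{lem:bad-flat}, and Proposition~\ref{prop:ses-quotients} to obtain (i) and (iii), and then reduce (ii)~$\Leftrightarrow$~central indecomposability to a statement about the pencil of alternating forms. The paper is terser than you are: in the forward direction it simply cites the proof of \cite{BMW}*{Theorem~3.18} for (ii), and in the backward direction it applies the block isometry $M_f^{-1}\oplus I_{cd}$ to reach the pair $(I,\,M_f^{-1}M_g)$ and then asserts that (ii) forces $M_f^{-1}M_g$ to have the rational canonical form of a single primary companion block. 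Your explicit handling of the edge case $c=1$ is a small improvement; the paper invokes Proposition~\ref{prop:ses-quotients}, whose hypothesis is $c>1$, without separating that case.

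One correction to your diagnosis of the ``main obstacle.'' Because $G$ is semi-extraspecial, the symmetric form $\beta_1(u,v)=\lambda_1(uv)$ is nondegenerate, and this is exactly the condition that $\lambda_1$ generates $A^*$ as an $A$-module ($A=K[x]/(a(x)^c)$ is Frobenius). Hence $\lambda_2$ \emph{is} $\lambda_1\circ L_h$ for some $h\in A$, and since every $L_a$ is $\beta_1$-self-adjoint one gets $B_1^{-1}B_2=L_h$ on the nose, not merely up to similarity. The genuine residual issue is that $h$ need not equal $\theta=f^{-1}g$, so what you must actually argue is $K[h]=A\iff K[\theta]=A$. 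The paper absorbs this into its reference to \cite{BMW}; your proposed route through the cyclic $K[\theta]$-module structure is a legitimate self-contained alternative.
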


\begin{proof}
If $G$ is indecomposable and semi-extraspecial, then by Theorem~\ref{thm:indecomp-genus2} $G$ is contained in one of two (disjoint) families of genus 2 groups.
By Lemma~\ref{lem:bad-flat} property (iii) holds, and by Proposition~\ref{prop:ses-quotients}, property (i) holds. 
Therefore, there exists $f,g\in A^\times$ that span a complement to $S$ in $A$. 
As $G$ is indecomposable, by the proof of \cite{BMW}*{Theorem~3.18}, $A$ is an irreducible $K[f^{-1}g]$-module. 

If, on the other hand, $G$ satisfies properties (i)--(iii) above, then by Proposition~\ref{prop:ses-quotients}, $G$ is semi-extraspecial.
As $\langle f, g\rangle \oplus S = A$, $\circ_G$ can be written as a pair of alternating matrices with respect to $f$ and $g$, as in Section~\ref{sec:Pfaffians}, of the form
\[ \left\{ \begin{bmatrix} 0 & M_f \\ -M_f^T & 0 \end{bmatrix}, \begin{bmatrix} 0 & M_g \\ -M_g^T & 0 \end{bmatrix} \right\}. \]
Since $G$ is semi-extraspecial, $M_f$ is invertible, so the block-diagonal matrix $M_f^{-1}\oplus I_{cd}$ induces an isometry of $\circ_G$ to an alternating bimap with $I_{cd}$ and $M_f^{-1}M_g$ in the top-right corners. 
Because $A$ is an irreducible $K[f^{-1}g]$-module, it follows that the rational canonical form of $M_f^{-1}M_g$ is the companion matrix of a primary polynomial.
Thus, $G$ is indecomposable.
\end{proof}

\section{Enumerating isoclinism classes}

We use Theorems~\ref{thm:Pfaffian-bijection} and~\ref{thm:genus2-ses} to count the isoclinism classes of semi-extraspecial groups whose derived subgroup has order $p^2$.
Therefore, these groups can have genus $1$ or $2$ over $\mathbb{F}_p$.
The genus 1 case essentially follows from the classification of generalized Heisenberg groups in \cite{LW:Heisenberg}*{Theorem~3.1}.

In the next lemma, we use a consequence of Witt's Theorem \cite{Artin:Geometric-algebra}*{Chapter~3}: two alternating bilinear $K$-forms are isometric if, and only if, their radicals have the same dimension.

\begin{lem}\label{lem:genus-1}
Assume $n\geq 2$ and $q$ is a $p$-power. There are
\[ H(q,n+1) = \left\{ \begin{array}{ll} 1 & n \text{ even},\\ 0 & \text{otherwise}, \end{array}\right. \]
semi-extraspecial groups of order $q^{n+1}$ with exponent $p$ and derived subgroup of order $q$, where $\mathcal{C}(\circ_G)\cong\mathbb{F}_{q}$.
\end{lem}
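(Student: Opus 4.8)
The plan is to reduce the statement to a counting problem about alternating bilinear forms over $\mathbb{F}_q$ via the Pfaffian/bimap machinery already set up, and then invoke Witt's theorem. A semi-extraspecial group $G$ of exponent $p$, order $q^{n+1}$, derived subgroup of order $q$, and $\mathcal{C}(\circ_G)\cong\mathbb{F}_q$ is, by definition, genus $1$ over $\mathbb{F}_q$: its commutator bimap becomes a single fully nondegenerate alternating $\mathbb{F}_q$-bilinear form $\circ_G : V\times V\rightarrowtail W$ with $\dim_{\mathbb{F}_q} W=1$ and $\dim_{\mathbb{F}_q} V=n$. Conversely, by the Baer correspondence, every fully nondegenerate alternating $\mathbb{F}_q$-form on an $n$-dimensional space with $1$-dimensional codomain arises as $\circ_G$ for such a group, and isoclinism of two such groups is exactly pseudo-isometry of the associated forms (as recalled in Section~2). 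So the count we want is the number of pseudo-isometry classes of fully nondegenerate alternating $\mathbb{F}_q$-forms $V\times V\rightarrowtail \mathbb{F}_q$ with $\dim V=n$.

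First I would fix an identification $W\cong\mathbb{F}_q$, turning $\circ_G$ into an honest alternating bilinear form $b:V\times V\to\mathbb{F}_q$; changing the identification rescales $b$ by a nonzero scalar, which is absorbed into the pseudo-isometry group, so pseudo-isometry classes of bimaps correspond to $\mathrm{GL}(V)$-orbits of such forms (scaling being already contained in $\mathrm{GL}(V)$-conjugation's effect on the classification). Then I would apply Witt's theorem in the form cited just before the lemma: two alternating $\mathbb{F}_q$-forms on $V$ are isometric (equivalently here, in the same $\mathrm{GL}(V)$-orbit) if and only if their radicals have equal dimension. Since $\circ_G$ is required to be nondegenerate, the radical is trivial, so there is at most one class, and it exists precisely when a nondegenerate alternating form on an $n$-dimensional space exists.

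The only remaining point—which is the crux of the parity statement—is that a nondegenerate alternating bilinear form on an $n$-dimensional $\mathbb{F}_q$-space exists if and only if $n$ is even: an alternating form has even rank (its matrix is skew with zero diagonal, and rank equals twice the number of hyperbolic planes in a symplectic decomposition), so nondegeneracy forces $n$ even, and when $n$ is even the standard symplectic form $\bigl[\begin{smallmatrix} 0 & I \\ -I & 0\end{smallmatrix}\bigr]$ realizes it. This gives $H(q,n+1)=1$ when $n$ is even and $H(q,n+1)=0$ otherwise, which is the claimed formula. I expect the main (and only mild) obstacle to be bookkeeping the passage between bimaps-up-to-pseudo-isometry and forms-up-to-isometry—specifically checking that the codomain-rescaling freedom does not create or merge classes beyond what $\mathrm{GL}(V)$ already accounts for—but this is immediate since all nonzero scalars are squares-times-units and Witt's criterion (radical dimension) is scaling-invariant. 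Everything else is a direct appeal to results already in hand.
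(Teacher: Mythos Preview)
Your proposal is correct and follows essentially the same approach as the paper: both reduce to the classification of nondegenerate alternating $\mathbb{F}_q$-forms via Witt's theorem, observe that nondegeneracy forces $\dim V$ even, and conclude uniqueness in the even case. The paper is marginally more concrete---it exhibits the unique group explicitly as a Heisenberg-type matrix group rather than invoking the standard symplectic matrix---and your discussion of scaling and the pseudo-isometry/isometry distinction is more elaborate than needed (Witt already gives a single isometry class, and pseudo-isometry can only coarsen this), but the substance is the same.
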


\begin{proof}
Let $G$ be a group satisfying the assumptions and $K=\mathcal{C}(\circ_G)$.
As $\circ_G:V\times V\rightarrowtail W$ is $K$-bilinear and $\dim_{K}(W)=1$, it follows that $\circ_G$ is an alternating $K$-form.
By Lemma~\ref{lem:direct-decomp}, $G$ is directly indecomposable, so $\circ_G$ has trivial radical.
This leads to a contradiction if $\dim_K(V)$ is odd, so no such groups exist.
On the other hand, if $\dim_K(V)=2n$, then
\[ G \cong \left\{ \begin{bmatrix} 1 & u & w \\ & I_n & v^T \\ & & 1 \end{bmatrix} \;\middle|\; \begin{array}{c} u,v\in K^n, \\ w \in K \end{array} \right\} \leq \GL(n+2, K). \qedhere \]
\end{proof}

To prove the next theorem, we require a way to build nilpotent groups from bimaps.
Suppose $\circ : U \times V \rightarrowtail W$ is a $K$-bimap.
Then the Heisenberg group with respect to $\circ$ is the group
\[ H(\circ) = \left\{ \begin{bmatrix} 1 & u & w \\ & 1 & v \\ & & 1 \end{bmatrix} \;\middle|\; u\in U, v\in V, w\in W \right\}, \]
where the product is given by matrix multiplication using $\circ$.
Assuming $\circ$ is fully nondegenerate, the center of $H(\circ)$ equals the derived subgroup and $Z(H(\circ))\cong W$.
Moreover, the commutator bimap of $H(\circ)$ has ``doubled'' $[,]:(U\oplus V)\times (U\oplus V)\rightarrowtail W$ where
\[ \big( (u_1, v_1), (u_2, v_2) \big) \mapsto u_1\circ v_2 - u_2\circ v_1. \]
As matrices, if $\circ$ has structure constants given by the sequence $(M_i)_{i=1}^d$, then $[,]$ has structure constants of the form $\left[ \begin{smallmatrix} 0 & M_i \\ -M_i^T & 0 \end{smallmatrix} \right]$.

Recall from Section~\ref{sec:GL2-action} that $N(q,n)$ is the number of $\GammaL(2, \mathbb{F}_q)$-orbits of monic irreducible polynomials of degree $n$.
As $\GL(2, \mathbb{F}_q)$ is transitive on the nonzero vectors in $\mathbb{F}_q^2$, it follows that $N(q, 1)=1$.

\begin{thm}\label{thm:indecomp}
Suppose $n\geq 2$ and $q$ is a $p$-power.
There are $I(q, 2n+2) = -1 + \sum_{d\mid n} N(q,d)$ pairwise non-isomorphic, centrally indecomposable, semi-extraspecial groups of order $q^{2n+2}$ with exponent $p$ and genus $2$ over $\mathbb{F}_q$.
\end{thm}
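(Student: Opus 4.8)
The strategy is to combine Theorem~\ref{thm:genus2-ses} (which, through Theorem~\ref{thm:indecomp-genus2}, pins down the bimaps of the relevant groups) with Theorem~\ref{thm:Pfaffian-bijection} (which makes the Pfaffian a complete isomorphism invariant of a centrally indecomposable group of exponent $p$ and genus $2$), thereby reducing the count to a count of $\GammaL(2,q)$-orbits of principal ideals of homogeneous polynomials in $\mathbb{F}_q[x,y]$; that orbit count is then sorted by the degree and the multiplicity of the generator. Write $K=\mathbb{F}_q$.

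\emph{From groups to Pfaffian ideals.} Let $G$ be centrally indecomposable, semi-extraspecial, of exponent $p$, genus $2$ over $K$, and order $q^{2n+2}$. By Lemma~\ref{lem:bad-flat} the groups $H^{\flat}(K,m)$ are never semi-extraspecial, so $G$ arises from Theorem~\ref{thm:indecomp-genus2}(i); thus (as recorded just before Proposition~\ref{prop:primary-Pfaff}) $\circ_G$ is isometric to the pair of alternating forms with structure constants $\left\{\left[\begin{smallmatrix}0&I\\-I&0\end{smallmatrix}\right],\left[\begin{smallmatrix}0&C\\-C^{T}&0\end{smallmatrix}\right]\right\}$ for $C$ the companion matrix of $a(x)^c$ with $a\in K[x]$ irreducible, and $C$ has size $n$ because $\dim_K G' = 2$ and $|G|=q^{2n+2}$, so $dc=n$ where $d=\deg a$. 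Since $\det\left[\begin{smallmatrix}0&B\\-B^{T}&0\end{smallmatrix}\right]=\det(B)^2$, and $\det(xI+yC)$ equals up to sign the $c$-th power of the irreducible homogeneous degree-$d$ form $\tilde a(x,y)$ obtained by homogenizing $a$, we get $\Pf(\circ_G)^2=\tilde a(x,y)^{2c}$, so the Pfaffian ideal of $G$ is $(\tilde a(x,y)^c)$. As $G$ is semi-extraspecial, Theorem~\ref{thm:Pfaffian} forces $\tilde a^c$ to have no zero in $\mathbb{P}^1(\mathbb{F}_q)$, i.e.\ $a$ has no root in $\mathbb{F}_q$, i.e.\ $d\geq 2$.

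\emph{Realizing every admissible ideal.} Conversely, fix $d\mid n$ with $d\geq 2$, put $c=n/d$, choose an irreducible $a\in K[x]$ of degree $d$, and let $C$ be the companion matrix of $a(x)^c$. Let $G$ be the exponent-$p$, class-$2$ group with commutator bimap $\left\{\left[\begin{smallmatrix}0&I\\-I&0\end{smallmatrix}\right],\left[\begin{smallmatrix}0&C\\-C^{T}&0\end{smallmatrix}\right]\right\}$ furnished by the Baer correspondence; then $|G|=q^{2n+2}$, and $\Pf(\circ_G)=\pm\tilde a(x,y)^c$ has no zero in $\mathbb{P}^1(\mathbb{F}_q)$ because the $d\geq 2$ roots of $a$ form a single Galois orbit with no rational point, so $G$ is semi-extraspecial by Theorem~\ref{thm:Pfaffian}. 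The pencil is a single indecomposable pair of alternating forms, so $G$ is centrally indecomposable. Finally, a short computation of the centroid equations shows that every $(X,Z)\in\mathcal{C}(\circ_G)$ has $X$ a $K$-combination of $I$ and $B_1^{-1}B_2=C^{T}\oplus C$, with a compatibility condition on $Z$ that forces $X$ scalar whenever $C$ is annihilated by no polynomial of degree $\le 2$; since the minimal polynomial of $C$ is $a(x)^c$ of degree $n$, this gives $\mathcal{C}(\circ_G)\cong K$ as soon as $n\geq 3$, so $G$ is genus $2$ over $K$. Hence every ideal $(\tilde a^c)$ with $d\geq 2$ and $dc=n$ is realized by such a group.

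\emph{Counting the orbits.} For centrally indecomposable $G$ the only fully-refined central decomposition is $\{G\}$, so Theorem~\ref{thm:Pfaffian-bijection} says two such groups are isomorphic exactly when their Pfaffians generate the same $\GammaL(2,q)$-orbit of ideals; thus $I(q,2n+2)$ equals the number of $\GammaL(2,q)$-orbits of ideals $(\tilde a(x,y)^c)$ with $a$ irreducible of degree $d\geq 2$ and $dc=n$. The action satisfies $(\tilde a^c)^M=((\tilde a^M)^c)$, so in the unique factorization domain $\mathbb{F}_q[x,y]$, comparing degrees shows two such ideals lie in one orbit iff they have the same multiplicity $c$ (equivalently the same $d=n/c$) and equivalent irreducible parts. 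Therefore the orbit set is the disjoint union, over $d\mid n$ with $d\geq 2$, of the orbit sets of irreducible homogeneous degree-$d$ forms in $\mathbb{F}_q[x,y]$; and for $d\geq 2$ no member of such an orbit is divisible by $y$, since $\GL(2,q)$ permutes $\mathbb{P}^1(\mathbb{F}_q)$ and an irreducible polynomial of degree $\geq 2$ has no rational root, so dehomogenizing at $y=1$ is a $\GammaL(2,q)$-equivariant bijection onto the monic irreducible polynomials of degree $d$ in $\mathbb{F}_q[x]$, whose orbit set has size $N(q,d)$. Hence $I(q,2n+2)=\sum_{d\mid n,\ d\geq 2}N(q,d)=-1+\sum_{d\mid n}N(q,d)$, using $N(q,1)=1$.

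\emph{Main obstacle.} The first and third steps are essentially bookkeeping with the cited theorems; the work is in the second. The delicate point is that the bimap assembled from $\{I,C\}$ genuinely has centroid $K$ rather than a quadratic extension — equivalently, that a single companion block of degree $n\geq 3$ admits no nonscalar endomorphism that is simultaneously self-adjoint for both forms of the pencil. This is also where the small case $n=2$ wants separate attention: there $C$ is $2\times 2$, the centroid grows to $\mathbb{F}_{q^2}$, and the tally of genus-$2$-over-$K$ groups must be checked directly.
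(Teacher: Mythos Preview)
Your proof follows essentially the same route as the paper's: both invoke Theorem~\ref{thm:Pfaffian-bijection} to reduce the count to $\GammaL(2,q)$-orbits of primary Pfaffian ideals, realize each such ideal by the companion-matrix pencil, and then use Theorem~\ref{thm:Pfaffian} to discard the linear case $d=1$ before summing over divisors. You add an explicit (and essentially correct) centroid check to confirm genus~$2$ and flag the boundary case $n=2$, whereas the paper simply asserts the constructed group is genus~$2$; otherwise the arguments coincide.
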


\begin{proof}
First apply Theorem~\ref{thm:Pfaffian-bijection} to establish that two indecomposable genus 2 groups with exponent $p$ are isomorphic if, and only if, their Pfaffians are in the same $\GammaL(2,q)$-orbit.
By Proposition~\ref{prop:primary-Pfaff}, their Pfaffians are primary.
Let $a(x)^c=x^{cd} + a_{cd-1}x^{cd-1} + \cdots + a_0$, and assume $a(x)$ is an irreducible polynomial in $\mathbb{F}_q[x]$ and $c\geq 1$.
Let $C$ be the companion matrix of $a(x)^c$:
\[ C = \left[\begin{array}{c|c}
  \begin{array}{c} 0 \\ \vdots \\ 0 \end{array} & I_{cd-1} \\ \hline
  -a_0 & \begin{array}{ccc} -a_1 & \cdots & -a_{cd-1} \end{array} \end{array}\right]. \]
The group with commutator bimap whose structure constants are
\[ \left( \begin{bmatrix} 0 & I \\ -I & 0 \end{bmatrix}, \; \begin{bmatrix} 0 & -C \\ C^T & 0 \end{bmatrix} \right) \]
has a Pfaffian equal to $X^{cd} + a_{cd-1}X^{cd-1}Y + \cdots + a_0Y^{cd}$.
Note that this is the determinant of $IX - CY$.
Therefore, for every irreducible polynomial $a(x)$ of degree $d$ and $c\geq 1$, there exists an indecomposable $p$-group of order $q^{2cd+2}$ of genus 2 and exponent $p$ where $\Pf(\circ_G)=a(x)^c$.
By Theorem~\ref{thm:Pfaffian-bijection}, there are $N(q,d)$ isoclinism classes of this type.
By Theorem~\ref{thm:Pfaffian}, semi-extraspecial groups have no linear factors, so we require $d\geq 2$.
Thus, to get the number of isoclinism classes of indecomposable semi-extraspecial groups of order $q^{2n+2}$ that are genus 2 over $\mathbb{F}_q$, we sum over all the divisors $d$ of $n$, excluding $d=1$.
\end{proof}

\begin{thm}\label{thm:p^8}
For every prime $p$, there is exactly one isoclinism class of semi-extraspecial groups $G$ with the property that $|G|=p^8$ and $|G'|=p^2$.
\end{thm}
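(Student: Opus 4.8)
The plan is to separate the argument by the genus of $G$ and then invoke Theorem~\ref{thm:indecomp}. Let $\circ_G\colon V\times V\rightarrowtail W$ be the commutator bimap, so $\dim_{\mathbb F_p}V=6$ and $\dim_{\mathbb F_p}W=2$. By Proposition~\ref{prop:centroid-field} the centroid $K:=\mathcal C(\circ_G)$ is a field, and as it is faithfully represented in $\End(W)$ (Remark~\ref{rem:cent}) it embeds into $M_2(\mathbb F_p)$; hence $K\cong\mathbb F_p$ or $K\cong\mathbb F_{p^2}$, i.e.\ $G$ has genus $1$ or $2$. If $K\cong\mathbb F_{p^2}$ then $W$ has $K$-dimension $1$ and $V$ has $K$-dimension $3$, so $\circ_G$ is a nondegenerate alternating $K$-form on an odd-dimensional space, which is impossible; equivalently, Lemma~\ref{lem:genus-1} gives $H(p^2,4)=0$ because $4=n+1$ with $n=3$ odd. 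So $G$ has genus $2$ over $\mathbb F_p$.

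Next I would show that such a $G$ is centrally indecomposable. A fully-refined central decomposition $\{H_1,\dots,H_r\}$ of $G$ yields, because $\circ_G$ is fully nondegenerate, an orthogonal direct sum $V=V_1\oplus\cdots\oplus V_r$ with $V_i\circ_G V_j=0$ for $i\ne j$; in a basis of $V$ adapted to this decomposition the structure-constant matrices of $\circ_G$ become block-diagonal, so $\operatorname{disc}(\circ_G)=\prod_i\operatorname{disc}(\circ_{H_i})$. As $G$ is semi-extraspecial, $\Pf(\circ_G)\neq 0$ by Theorem~\ref{thm:Pfaffian}, so no $\operatorname{disc}(\circ_{H_i})$ vanishes, each $\dim_{\mathbb F_p}V_i$ is even, and $\Pf(\circ_G)=\pm\prod_i\Pf(\circ_{H_i})$, a product of nonzero homogeneous polynomials whose degrees sum to $\deg\Pf(\circ_G)=3$. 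Again by Theorem~\ref{thm:Pfaffian} the polynomial $\Pf(\circ_G)$ has no zero in $\mathbb{P}W$, hence no factor $\Pf(\circ_{H_i})$ has one, so none of them is linear and $\deg\Pf(\circ_{H_i})\geq 2$ for every $i$. From $\sum_i\deg\Pf(\circ_{H_i})=3$ we conclude $r=1$.

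It then remains to count the centrally indecomposable, genus-$2$, semi-extraspecial groups of order $p^8$. By Theorem~\ref{thm:indecomp} with $q=p$ and $n=3$ there are $I(p,8)=-1+\sum_{d\mid 3}N(p,d)=-1+N(p,1)+N(p,3)=-1+1+1=1$ of them, using $N(p,1)=1$ and $N(p,3)=1$ from Theorem~\ref{thm:GL2-orbits}. (Equivalently, $\Pf(\circ_G)$ is a primary cubic form by Proposition~\ref{prop:primary-Pfaff} with no zero in $\mathbb{P}W$, hence an irreducible cubic, of which there is a single $\GammaL(2,p)$-orbit, so Theorem~\ref{thm:Pfaffian-bijection} pins down $G$ up to isomorphism.) Since isoclinism classes of these semi-extraspecial groups correspond to the same $\GammaL(2,p)$-orbit data, combining with the genus-$1$ case gives exactly one isoclinism class.

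The step I expect to be the main obstacle is showing $G$ is centrally indecomposable: one must make precise that a central decomposition of $G$ induces an orthogonal \emph{direct} decomposition of $V$ — so that $\operatorname{disc}(\circ_G)$, and hence $\Pf(\circ_G)$ up to sign, factors over the central factors — and then exploit that Theorem~\ref{thm:Pfaffian} forbids linear factors of $\Pf(\circ_G)$. A self-contained group-theoretic alternative avoids the Pfaffian bookkeeping: the factors $H_i$ of a fully-refined central decomposition of a semi-extraspecial $G$ are themselves semi-extraspecial with $Z(H_i)=H_i'=Z(G)$ (no $H_i$ is abelian, and $[g,\cdot]$ already maps $H_i$ onto $G'$ for any $g\in H_i\setminus Z(G)$), so each $|H_i|\geq p^6$ by Theorem~\ref{thm:indecomp} and Lemma~\ref{lem:genus-1}; if $r\geq 2$ this forces $|G|\geq p^6\cdot p^6/p^2=p^{10}>p^8$, a contradiction.
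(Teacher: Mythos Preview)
Your proof is correct. The paper's own argument is essentially your \emph{alternative} route: it notes that in any nontrivial central decomposition each factor $H_i$ is semi-extraspecial with $Z(H_i)=Z(G)$ (citing Proposition~\ref{prop:central-decomp}, though your direct observation that $[g,H_i]=[g,G]=G'$ for $g\in H_i\setminus Z(G)$ is cleaner, since that proposition's hypothesis already \emph{assumes} the $H_i$ are semi-extraspecial), then rules out $|H_i|\in\{p^4,p^5\}$ via Theorem~\ref{thm:indecomp-genus2} and Lemma~\ref{lem:bad-flat}, forcing indecomposability; it then finishes with $I(p,8)=N(p,3)=1$ just as you do.

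Your \emph{primary} argument---that a central decomposition induces an orthogonal direct sum $V=\bigoplus V_i$, so $\Pf(\circ_G)$ factors accordingly, and Theorem~\ref{thm:Pfaffian} forbids linear factors, whence degrees $\geq 2$ cannot sum to $3$ unless $r=1$---is a genuinely different and more self-contained route: it bypasses the classification of small indecomposable genus-$2$ groups altogether and uses only the Pfaffian criterion. The directness of $\bigoplus V_i$ that you flag as the obstacle does follow immediately from nondegeneracy (anything in $V_i\cap\sum_{j\ne i}V_j$ is orthogonal to every $V_k$, hence lies in the radical), so this is not a gap. You are also more careful than the paper in explicitly disposing of the genus-$1$ case before invoking Theorem~\ref{thm:indecomp}, which is stated only for genus $2$.
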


\begin{proof}
First, we show that $G$ must be indecomposable.
If $\{H_1,H_2\}$ is a central decomposition of $G$, then $|H_i|\in\{p^4, p^5, p^6\}$ because $[G:G']=p^6$.
From Proposition~\ref{prop:central-decomp}, $Z(H_i)=Z(G)\cong \mathbb{Z}_p^2$.
There are no genus 2 groups of order $p^4$ and by Lemma~\ref{lem:bad-flat}, the genus 2 group of order $p^5$ is not semi-extraspecial.
Therefore, $G$ is indecomposable.
By Theorems~\ref{thm:GL2-orbits} and~\ref{thm:indecomp}, there is $I(p, 8)=N(p,3)=1$ isoclinism class.
\end{proof}

\begin{remark}\label{remark:genus3}
An interesting consequence of Theorem~\ref{thm:p^8} is that every semi-extraspecial group with exponent $p$ and of order $p^9$ with $|G'|=p^3$, has exactly two non-trivial central nonabelian quotients, determined solely by the order of the factor group.
In particular, heuristics to construct characteristic subgroups via graph colorings, like in~\cite{BOW:graded-algebras}, or from algebras associated to $\circ_G$, like in \cites{M:efficient-filters,Wilson:filters}, produce no new information.
These groups may pose a formidable challenge to isomorphism testing.
\end{remark}

Are there other semi-extraspecial groups, with derived subgroup of order $p^3$, that have the property in Remark~\ref{remark:genus3}?
That is, if $N, M \leq Z(G)$ and $|N|=|M|$, then $G/N$ and $G/M$ are isoclinic.
If there are no other such groups, then what is the lower bound on the number of possible isoclinism classes of the form $G/N$, for a fixed order $|N|$?
On the other end of the spectrum, what is the upper bound?

\subsection{Proof of Theorem~\ref{main2}}

Enumerating semi-extraspecial groups of larger order is not as simple as in Theorem~\ref{thm:p^8} and will require understanding how $\GL(2,p)$ acts on (principal) ideals of \emph{reducible} polynomials.
We now focus on univariate polynomials in $\mathbb{F}_p[x]$, so we dehomogenize $\Pf(\circ_G)$ by setting $y=1$.
If $f\in \mathbb{F}_p[x]$, we let $S_f$ denote the subgroup, $\Stab_{\GL(2,p)}((f))$, stabilizing the ideal $(f)$.

The next lemma is a well-known statement coming from the (maximal) subgroup structure of $\PSL(2,p)$.
We refer the reader to Dickson \cite{Dickson}, see also Huppert \cite{Huppert} and, for historical remarks, King \cite{King}*{Theorem~2.1}.

\begin{lem}\label{lem:maximal-stab}
If $f$ is an irreducible quadratic in $\mathbb{F}_p[x]$, then $S_f\cong\GammaL(1,p^2)$.
The subgroups isomorphic to $\GammaL(1,p^2)$ in $\GL(2,p)$ are in bijection to monic irreducible quadratics in $\mathbb{F}_p[x]$.
\end{lem}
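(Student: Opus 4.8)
The plan is to identify $S_f$ with the setwise stabiliser of the zero set of $f$ in $\mathbb{P}^1$. Since $\GL(2,p)$ acts on $\mathbb{P}^1$ over $\overline{\mathbb{F}_p}$ by fractional linear transformations (scalars acting trivially), and this action is compatible with its substitution action on degree-$2$ forms, an element $M$ lies in $S_f$ exactly when the associated transformation preserves the (unordered) pair of roots of $f$. For $f$ irreducible these roots form a Galois-conjugate pair $R_f=\{\alpha,\alpha^p\}$ of distinct points of $\mathbb{P}^1(\mathbb{F}_{p^2})\setminus\mathbb{P}^1(\mathbb{F}_p)$, so $S_f$ is the preimage in $\GL(2,p)$ of $\Stab_{\PGL(2,p)}(R_f)$. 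Concretely, fixing $g\in\GL(2,p)$ with characteristic polynomial $f$, the pair $R_f$ is just the pair of eigenlines of $g$ over $\mathbb{F}_{p^2}$, so $S_f$ is the setwise stabiliser of those two lines; this is the point of contact with the subgroup structure of $\PGL(2,p)$ recorded in \cite{Dickson}.

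First I would compute $S_f$ directly. Let $T\leq S_f$ be the subgroup fixing each of $\alpha,\alpha^p$; it has index at most $2$ in $S_f$, the nontrivial coset (if any) consisting of transformations swapping the two points. Over $\mathbb{F}_{p^2}$ the elements of $T$ are simultaneously diagonalised along the eigenlines $\langle v_\alpha\rangle$ and $\langle v_{\alpha^p}\rangle$, and since the matrices are defined over $\mathbb{F}_p$, applying Frobenius shows that an element of $T$ with eigenvalue $\lambda$ on $v_\alpha$ has eigenvalue $\lambda^p$ on $v_{\alpha^p}$; sending it to $\lambda$ gives an isomorphism $T\cong\mathbb{F}_{p^2}^\times$ (surjectivity because every $\lambda$ determines an $\mathbb{F}_{p^2}$-linear map commuting with the semilinear Frobenius, hence descending to $\mathbb{F}_p^2$). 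Thus $|T|=p^2-1$. To exhibit a swap, take the $\mathbb{F}_p$-linear map $z\mapsto z^p$ on $\mathbb{F}_{p^2}$, written as a matrix in an $\mathbb{F}_p$-basis; it lies in $S_f\setminus T$. Any $j\in S_f\setminus T$ interchanges the eigenlines and hence induces $\lambda\mapsto\lambda^p$ on $T\cong\mathbb{F}_{p^2}^\times$ by conjugation; on the span of $v_\alpha,v_{\alpha^p}$ it is anti-diagonal, so $j^2$ is a scalar in $\mathbb{F}_p^\times$, and replacing $j$ by $tj$ for $t\in T$ multiplies $j^2$ by $N_{\mathbb{F}_{p^2}/\mathbb{F}_p}(t)$. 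Because the norm maps onto $\mathbb{F}_p^\times$ we may choose $j$ with $j^2=1$, whence $S_f=T\rtimes\langle j\rangle\cong\mathbb{F}_{p^2}^\times\rtimes\Gal(\mathbb{F}_{p^2}/\mathbb{F}_p)=\GammaL(1,p^2)$, of order $2(p^2-1)$. This gives the first assertion.

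For the bijection, the map $f\mapsto S_f$ is injective because $R_f$ is recovered from $S_f$ as the pair of common eigenlines of the unique cyclic subgroup of order $p^2-1$ in $S_f$, and a monic quadratic is determined by its roots. For surjectivity, let $H\leq\GL(2,p)$ with $H\cong\GammaL(1,p^2)$. Inside $\GammaL(1,p^2)$ any element outside $\mathbb{F}_{p^2}^\times$ has the shape $\beta\phi$ with $(\beta\phi)^2=\beta^{1+p}\in\mathbb{F}_p^\times$, hence order dividing $2(p-1)<p^2-1$; so $\mathbb{F}_{p^2}^\times$ is the unique cyclic subgroup of order $p^2-1$ and is therefore characteristic. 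Thus $H$ contains a characteristic cyclic subgroup $T'$ of order $p^2-1$; a generator $g$ of $T'$ cannot be conjugate into a Borel subgroup of $\GL(2,p)$, whose element orders all divide $p(p-1)<p^2-1$, so $g$ acts irreducibly and its characteristic polynomial is an irreducible monic quadratic $f$. Every element $h$ normalising $T'$ carries eigenvectors of $g$ to eigenvectors of $hgh^{-1}\in T'$, which is non-scalar and hence has exactly the two eigenlines of $g$; so $H$ stabilises $R_f$, i.e.\ $H\leq S_f$, and comparing orders forces $H=S_f$. Hence every such subgroup comes from a monic irreducible quadratic.

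I expect the main obstacle to be the construction in the second paragraph of an \emph{involutory} swap $j$: the count $|T|=p^2-1$ is routine, but showing that $S_f$ is the split extension $\GammaL(1,p^2)$, rather than some other extension of $\mathbb{F}_{p^2}^\times$ by $\mathbb{Z}/2$, hinges exactly on the surjectivity of the norm $N_{\mathbb{F}_{p^2}/\mathbb{F}_p}\colon\mathbb{F}_{p^2}^\times\to\mathbb{F}_p^\times$. The remaining points that need care — that $\mathbb{F}_{p^2}^\times$ is characteristic in $\GammaL(1,p^2)$, and that the normaliser of a nonsplit torus in $\GL(2,p)$ has order exactly $2(p^2-1)$ — are then standard once the element-order bounds above are in hand.
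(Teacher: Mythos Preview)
Your proof is correct and conceptually aligned with the paper's: both identify $S_f$ with the setwise stabiliser of the Galois-conjugate root pair $\{\alpha,\alpha^p\}$ in $\mathbb{P}^1(\mathbb{F}_{p^2})$, i.e.\ the normaliser of a nonsplit torus. The execution differs, however. The paper first obtains $|S_f|=2(p^2-1)$ by orbit--stabiliser (using the transitivity $N(p,2)=1$ already recorded in Theorem~\ref{thm:GL2-orbits}), then writes out $f^M$ explicitly to exhibit the action on roots, and finally appeals to the Dickson/Huppert subgroup lists cited just before the lemma to pin down the isomorphism type and the bijection. Your argument, by contrast, builds the isomorphism $S_f\cong\GammaL(1,p^2)$ constructively: you identify the pointwise stabiliser $T$ with $\mathbb{F}_{p^2}^\times$ via eigenvalues, produce an explicit involutory swap using norm surjectivity, and then handle the bijection by showing $\mathbb{F}_{p^2}^\times$ is characteristic in $\GammaL(1,p^2)$ via the element-order bound $2(p-1)<p^2-1$. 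Your route is longer but self-contained and avoids invoking the classical maximal-subgroup classification; the paper's route is terse precisely because that classification is doing the work.
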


\begin{proof}
By orbit counting, $|S_f|=2(p^2-1)$.
For $\alpha\in \mathbb{F}_{p^2}$ and $M=\left[\begin{smallmatrix} a & b \\ c & d \end{smallmatrix}\right]$, if $f(x)=(x-\alpha )(x-\alpha^p)$, then
\[ f^M(x) = (a-\alpha c)(a-\alpha^pc)\left( x - \dfrac{\alpha d-b}{a-\alpha c} \right)\left( x - \dfrac{\alpha^p d-b}{a-\alpha^p c} \right) .\]
This is the inverse action on the roots of $f$, so the lemma follows. 
\end{proof}

Note that the kernel of the action of $\GL(2,p)$ on polynomials is its center, so we work with $\PGL(2,p)$ instead.
In order to enumerate the isoclinism classes of semi-extraspecial groups order $p^{10}$ with derived subgroup of order $p^2$, we need to understand an action of dihedral groups in $\PGL(2,p)$.
Assuming $p$ is an odd prime, let $\Delta$ be the set of maximal dihedral subgroups of $\PGL(2,p)$ of order $2(p+1)$.
That is, $\Delta$ contains all the images of the subgroups isomorphic to $\GammaL(1,p^2)$ in $\GL(2,p)$.
Using the bijection given by Lemma~\ref{lem:maximal-stab}, a dihedral group acts on $\Delta$ in the following way.
If $D\in\Delta$ and $f$ is an irreducible quadratic associated to $D_f\in \Delta$, then for $\delta\in D$, $D_f^\delta=D_{f^\delta}$.
Therefore, for $D\in\Delta$, the $D$-orbit of $D$ is trivial.

\begin{lem}\label{lem:dihedral-orbits}
Let $p$ be an odd prime.
For a fixed $D\in\Delta$, the number of $D$-orbits on $\Delta-D$ is $(p-1)/2$.
\end{lem}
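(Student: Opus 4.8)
The plan is to translate the statement into a question about conjugacy classes of involutions in $\PGL(2,p)$ and then run Burnside's lemma. Write $\PGL(2,p)=\GL(2,p)/Z$, and let $C(g)$ denote the centralizer of $g$ in $\PGL(2,p)$. Each $D'\in\Delta$ is the normalizer of a cyclic non-split torus $T'$ of order $p+1$, so $T'$ has a unique involution $z_{D'}$; it is central in $D'$, its centralizer in $\PGL(2,p)$ has order $2(p+1)$, and hence $C(z_{D'})=D'$. Thus $D'\mapsto z_{D'}$ is injective and maps $\Delta$ bijectively onto the set $X$ of involutions of $\PGL(2,p)$ of ``non-split type'' (the single class of involutions with centralizer dihedral of order $2(p+1)$), a class of size $p(p-1)/2$; moreover the bijection is $D$-equivariant because $D'^{\,\delta}=C(z_{D'})^\delta=C(z_{D'}^{\,\delta})$. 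Since $z:=z_D$ is the unique $D$-fixed point of $X$ (if $D\le C(t)$ then $D=C(t)$ and $t=z$), the number of $D$-orbits on $\Delta-D$ is one less than the number of $D$-orbits on $X$.

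By Burnside's lemma this number of $D$-orbits on $X$ equals $\frac{1}{2(p+1)}\sum_{d\in D}|C(d)\cap X|$. Sorting the $2(p+1)$ elements of $D$ by conjugacy type in $D$, they are: the identity; the central involution $z$; the $p-1$ non-involution elements of the cyclic subgroup $C_{p+1}\le D$; the $(p+1)/2$ reflections that are involutions of split type; and the $(p+1)/2$ reflections of non-split type. (That the $p+1$ reflections of a dihedral torus-normalizer split evenly between the two involution classes of $\PGL(2,p)$ is a short check: a reflection lifts to a matrix whose square is a scalar $c$, it is of split type iff $c$ is a square in $\mathbb{F}_p^\times$, and as the reflection runs over its coset the scalar $c$ runs over $\mathbb{F}_p^\times$, hitting squares and non-squares equally often; alternatively one cites Dickson as before Lemma~\ref{lem:maximal-stab}.) Using the standard centralizer structure of $\PGL(2,p)$ I get $|C(1)\cap X|=p(p-1)/2$; $|C(z)\cap X|=|D\cap X|=1+(p+1)/2$ (the element $z$ together with the non-split reflections of $D$); $|C(d)\cap X|=1$ for each of the $p-1$ regular torus elements (the torus $C_{p+1}$ contains only the involution $z$ of $X$); $|C(d)\cap X|=(p-1)/2$ for each split-type reflection (its centralizer is dihedral of order $2(p-1)$, whose central involution is of split type and whose $p-1$ reflections split $(p-1)/2$–$(p-1)/2$ by type); and $|C(d)\cap X|=1+(p+1)/2$ for each non-split-type reflection $d$ (its centralizer is some $D_d\in\Delta$ with $d=z_{D_d}$). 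Summing,
\[
\sum_{d\in D}|C(d)\cap X| \;=\; \frac{p(p-1)}{2}+\frac{p+3}{2}+(p-1)+\frac{p+1}{2}\cdot\frac{p-1}{2}+\frac{p+1}{2}\cdot\frac{p+3}{2}\;=\;(p+1)^2,
\]
so there are $(p+1)^2/(2(p+1))=(p+1)/2$ orbits on $X$, and therefore $(p+1)/2-1=(p-1)/2$ orbits on $\Delta-D$.

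The only genuinely non-clerical input is the even split of the $p+1$ reflections of a torus-normalizer between the two involution classes; granting that, the rest is the tabulation of centralizers (routine from the subgroup structure of $\PGL(2,p)$ recalled before Lemma~\ref{lem:maximal-stab}) followed by the Burnside sum above, which collapses to $(p+1)^2$. One could instead avoid Burnside by proving directly that $\Stab_D(D')$ has order $1$, $2$, or $4$ for $D'\ne D$, with the order-$4$ case occurring exactly on the single $D$-orbit $\{C(r):r\text{ a non-split reflection of }D\}$ of size $(p+1)/2$, and then counting the remaining orbits; but this requires a finer orbit analysis, so the Burnside route is the cleaner one to write up.
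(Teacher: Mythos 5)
Your proof is correct, and it runs on the same engine as the paper's argument---Burnside's lemma applied to the conjugation action of $D$, with the same fixed-point numbers ultimately appearing (the central involution and each nonsplit-type reflection of $D$ lie in $(p+3)/2$ members of $\Delta$, each split-type reflection in $(p-1)/2$, and nontrivial non-involutions fix only $D$)---but you obtain those numbers by a genuinely different route. The paper sorts the involutions of $D$ by whether they lie in $\PSL(2,p)$, splits into the cases $p\equiv 1,3\pmod 4$, and gets the incidence counts by a global double count against the known totals of involutions inside and outside $\PSL(2,p)$ (Dornhoff, Steinberg). You instead note that $D'\mapsto z_{D'}$ is a $D$-equivariant bijection from $\Delta$ onto the class $X$ of involutions with centralizer of order $2(p+1)$, so the fixed points of $d\in D$ on $\Delta$ become $|C(d)\cap X|$, which you read off locally inside the dihedral centralizer of $d$; the only substantive input is the even split of the $p+1$ reflections of a torus normalizer between the two involution classes. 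Your lifting argument for that split is sound once phrased carefully: a reflection of $N(T)$ lifts to $x\mapsto ax^p$ on $\mathbb{F}_{p^2}$, its square is the scalar $N_{\mathbb{F}_{p^2}/\mathbb{F}_p}(a)$, whose square class is constant on the coset $a\mathbb{F}_p^\times$ and defines a surjective character of the order-$(p+1)$ quotient, giving exactly $(p+1)/2$ reflections of each type (citing Dickson, as you suggest, also suffices). What your route buys is uniformity in $p$ (no $\bmod\ 4$ case division) and self-containedness modulo the standard centralizer structure of $\PGL(2,p)$; what the paper's buys is brevity, at the cost of importing the involution counts from the literature and running two parallel cases. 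The degenerate readings at $p=3$ (where ``dihedral of order $2(p-1)$'' is the Klein four-group) do not affect your counts, and the arithmetic checks out: the Burnside sum collapses to $(p+1)^2$, giving $(p+1)/2$ orbits on $X$ and hence $(p-1)/2$ on $\Delta-D$.
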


\begin{proof}
We count these orbits by counting the fixed points for every $\delta\in D$.
First note that if $1\ne\delta\in D$ is not an involution, then $N_{\PGL(2,p)}(\langle\delta\rangle)=D$.
Thus, such a $\delta$ cannot fix another dihedral group in $\Delta$, so we focus only on the involutions of $D$.

Each dihedral group in $\Delta$ intersects $\PSL(2,p)$ uniquely in a dihedral group of order $p+1$.
Let $\overline{D}=D\cap\PSL(2,p)$.
Up to conjugacy, there are two types of involutions in $D$: those contained in $\PSL(2,p)$ and those outside \cite{Steinberg:rep-PGL}*{pp.~226--227}.
Suppose $p\equiv 1$ mod $4$, so $\overline{D}$ has exactly $(p+1)/2$ involutions.
Since $|\Delta|=(p^2-p)/2$, there are $(p^2-p)(p+1)/4$ involutions coming up this way.
However, $\PSL(2,p)$ has exactly $p(p+1)/2$ involutions \cite{Dornhoff:rep-theory}*{Chapter~38}, so each involution in $\PSL(2,p)$ lies in $(p-1)/2$ dihedral subgroups of order $p+1$.
Therefore, the involutions of $\overline{D}$ stabilize $(p-3)/2$ dihedral groups in $\Delta-D$.

There are $(p+3)/2$ involutions in $D$ outside of $\PSL(2,p)$, and as many as $(p^2-p)(p+3)/4$ involutions outside of $\PSL(2,p)$.
One of these involutions, $\delta$,  lies in $Z(D)$, so its centralizer is $D$.
As there are $(p^2-p)/2$ involutions of $\PGL(2,p)$ outside of $\PSL(2,p)$ \cite{Dornhoff:rep-theory}, it follows that $\delta$ fixes $(p+3)/2-1$ other dihedral groups in $\Delta-D$.
Therefore, the number of orbits in this case is equal to
\[ \dfrac{\left(\frac{1}{2}(p^2-p)-1\right) + \left(\frac{1}{2}(p+1)\frac{1}{2}(p-3)\right) + \left(\frac{1}{2}(p+3) \frac{1}{2}(p+1)\right)}{2(p+1)} = \dfrac{p-1}{2}. \]

The case when $p\equiv 3$ mod $4$ is similar to the above case.
There is one more involution in $\PSL(2,p)$ than above, namely $\overline{D}$ contains a central involution.
The number of fixed points for these involutions follows a similar argument as above.
The argument for the involutions outside of $\PSL(2,p)$ is slightly different.
Of the $(p+1)/2$ involutions of $D$ outside of $\PSL(2,p)$, each one is contained in the center of some dihedral group of order $2(p-1)$, so its centralizer is that dihedral group.
Hence, there are $(p^2-p)/2$ such involutions, which are all of the involutions of $\PGL(2,p)$ outside $\PSL(2,p)$.
Therefore, each involution of this type is contained in $(p-1)/2$ groups in $\Delta$, and thus, fixes $(p-3)/2$ groups in $\Delta-D$.
\end{proof}

Now we are in a position to prove Theorem~\ref{main2}.

\begin{proof}[Proof of Theorem~\ref{main2}]
If $G$ is indecomposable, then apply Theorems~\ref{thm:GL2-orbits} and~\ref{thm:indecomp}, so there are
\[ I(p, 2\cdot 4+2) = N(p,4)+N(p,2)= \frac{1}{2}\left( p + 4 - \gcd(2, p)\right) \]
such isoclinism classes.
Now suppose $G$ is centrally decomposable with fully refined central decomposition $\mathcal{H}$.
From the proof of Theorem~\ref{thm:p^8}, for each $H\in\mathcal{H}$, $|H|\geq p^6$.
Because $|G|=p^{10}$, it follows that $\mathcal{H}=\{H_1,H_2\}$, where $|H_i|=p^6$.
We have two options for the centroid of $\circ_G$ from Proposition~\ref{prop:centroid-field}: $\mathcal{C}(\circ_G)$ is isomorphic to either $\mathbb{F}_p$ or $\mathbb{F}_{p^2}$.
If $\mathcal{C}(\circ_G)$ is the quadratic extension, then by Lemma~\ref{lem:genus-1}, there is exactly $H(p^2, 5)=1$ group with this property.

We suppose the centroid is the prime field.
In this case, the Pfaffian of $\circ_G$ in $\mathbb{F}_p[x,y]$ is the product of two homogeneous quadratics $f$ and $g$.
Without loss of generality, we assume that both $f$ and $g$ are monic.
If $f=g$, then $\circ_G$ is $\mathbb{F}_p[x]/(f)$-bilinear, so the centroid would be a (non-trivial) field extension, which has already been accounted for.
Therefore, $f\ne g$.
This means that, for $p=2$, there are no groups of this type because there is a unique irreducible monic quadratic.
Thus, we now assume $p>2$.
By Theorem~\ref{thm:Pfaffian-bijection}, the number of isoclinism classes of these groups is the number of $\PGL(2,p)$-orbits on the ideals generated by a product of monic irreducible quadratics.
As $\PGL(2,p)$ is transitive on monic irreducible quadratics, the number of orbits is equal to the number of orbits of one of the stabilizers, say, $\Stab_{\PGL(2,p)}((f))$.
By Lemmas~\ref{lem:maximal-stab} and~\ref{lem:dihedral-orbits}, there are exactly $(p-1)/2$ orbits.
Thus, the statement of the theorem holds.
\end{proof}

\subsection{Proof of Theorem~\ref{main3}}

Because all the monic irreducible cubics lie in one $\PGL(2,p)$-orbit, c.f.\ Theorem~\ref{thm:GL2-orbits}, it follows, again by orbit counting, that their stabilizers have order $3$ in $\PGL(2,p)$.

\begin{lem}\label{lem:Syl-3}
In $\PGL(2,p)$, if $p\equiv 2\mod 3$, then there is a bijection between the Sylow $3$-subgroups and the dihedral subgroups of order $2(p+1)$ given by inclusion.
\end{lem}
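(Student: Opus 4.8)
The plan is to write down the obvious inclusion map and check it is a bijection. For a dihedral subgroup $D\le\PGL(2,p)$ of order $2(p+1)$, let $T_D$ be its cyclic subgroup of index $2$; every element of $D\setminus T_D$ has order $2$, so the unique subgroup $P_D\le T_D$ whose order is the $3$-part $q$ of $p+1$ is the unique Sylow $3$-subgroup of $D$. Since $p\equiv 2\pmod 3$ gives $3\mid p+1$ and $3\nmid p(p-1)$, the $3$-part of $|\PGL(2,p)|=p(p-1)(p+1)$ is exactly $q$, so $P_D$ is a Sylow $3$-subgroup of $\PGL(2,p)$; this makes $D\mapsto P_D$ a well-defined map, with $P_D\le D$, from the dihedral subgroups of order $2(p+1)$ to the Sylow $3$-subgroups of $\PGL(2,p)$, which I would prove is a bijection. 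Throughout I use the standard subgroup structure of $\PGL(2,p)$ recalled before Lemma~\ref{lem:maximal-stab} (Dickson): the cyclic subgroups of order $p+1$ are the non-split tori, they are permuted transitively under conjugation, the normalizer of such a torus $T$ is dihedral of order $2(p+1)$, and conversely every dihedral subgroup of order $2(p+1)$ is the normalizer of its own index-$2$ cyclic subgroup.

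The heart of the argument is a local statement: the centralizer $C(t)$ of a nontrivial $3$-element $t$ (all normalizers and centralizers below are taken in $\PGL(2,p)$) is a cyclic non-split torus $T$ of order $p+1$. This is where $p\equiv 2\pmod 3$ enters: $t$ is not unipotent (its order is prime to $p$) and does not lie in a split torus (whose order $p-1$ is prime to $3$), so $t$ is a regular semisimple element of non-split type, and by the description of semisimple centralizers in $\PGL(2,p)$, $C(t)$ is the unique non-split torus of order $p+1$ containing $t$; concretely, lifting $t$ to $\GL(2,p)$ its eigenvalues form a conjugate pair in $\mathbb{F}_{p^2}\setminus\mathbb{F}_p$, and one checks (using determinants, and the subcase $\lambda=-1$ separately when $p$ is odd) that passing to $\PGL(2,p)$ introduces no further centralizing elements. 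Granting this, let $P$ be any Sylow $3$-subgroup and pick $t\in P$ of order $3$; then $P\le C(P)\le C(t)=T$, and since $T$ is abelian and contains $P$ we get $C(P)=T$. As $P$ is the unique subgroup of its order in the cyclic group $T$, it is characteristic in $T$, so $N(T)\le N(P)\le N(C(P))=N(T)$, whence $N(P)=N(T)=:D$ is a dihedral group of order $2(p+1)$ containing $P$.

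With this in hand, surjectivity is immediate: the dihedral group $D$ just produced contains $P$, and the unique Sylow $3$-subgroup of $D$ lies in its cyclic part and has order $q$, hence equals $P$; thus $P=P_D$. Injectivity is equally quick: if $P_{D_1}=P_{D_2}=P$ then $D_1,D_2\le N(P)=D$ while $|D_1|=|D_2|=|D|=2(p+1)$, forcing $D_1=D_2$. (As a consistency check, both sets then have size $[\PGL(2,p):D]=p(p-1)/2$, matching $|\Delta|=(p^2-p)/2$.) The only case needing a separate remark is $p=2$, where $\PGL(2,2)\cong S_3$ has a single Sylow $3$-subgroup and a single dihedral subgroup of order $6$, so the claim is trivial. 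The main obstacle is precisely the local statement: establishing that a $3$-element of $\PGL(2,p)$ is centralized by a full non-split torus and nothing more is the one point requiring an actual computation inside $\PGL(2,p)$ rather than bookkeeping with its subgroup lattice.
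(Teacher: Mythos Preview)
Your proof is correct and follows essentially the same strategy as the paper: both arguments reduce the bijection to the equality $N_{\PGL(2,p)}(P)=D$, where $D$ is the dihedral group of order $2(p+1)$ containing the Sylow $3$-subgroup $P$, after which $D\mapsto P_D$ and $P\mapsto N(P)$ are mutually inverse. The paper is terser, asserting $N_G(P)=D$ directly from $N_G(\langle x\rangle)=D$, whereas you make this step explicit by computing the centralizer of a $3$-element to be the non-split torus $\langle x\rangle$, yielding $N_G(P)=N_G(C_G(P))=N_G(\langle x\rangle)=D$.
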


\begin{proof}
Let $D$ denote a dihedral subgroup of $G=\PGL(2,p)$ of order $2(p+1)$.
As $p\equiv 2\mod 3$, $D$ contains a Sylow $3$-subgroup, $P$.
If $x\in D$ has order $p+1$, then $P\leq \langle x\rangle$.
Because $N_{G}(\langle x\rangle) = D$, it follows that $N_{G}(P)=D$.
\end{proof}

\begin{proof}[Proof of Theorem~\ref{main3}]
In the indecomposable case, we apply Theorem~\ref{thm:indecomp}, and in the case where $G$ is genus 2 over the quadratic extension, we apply Lemma~\ref{lem:genus-1}.
By Theorem~\ref{thm:GL2-orbits}, the number of isoclinism classes is
\[ I(p, 2\cdot 5 + 2) + H(p^2, 6) = N(p,5) + 0 = \frac{1}{5} \left( p^2-2+\gcd(5,p)+2\gcd(5, p^2-1)\right). \]

Now, all we have left to do is to count the number isoclinism classes in the decomposable case where the centroid is the prime field.
By Proposition~\ref{prop:central-decomp} and Theorem~\ref{thm:genus2-ses}, a fully refined central decomposition of $G$ is $\{H_1,H_2\}$, where $|H_1|=p^6$ and $|H_2|=p^8$.
Thus, the multiset of Pfaffians contains exactly one quadratic and cubic.
Hence, the stabilizer, in $\GL(2,p)$, of the multiset is the intersection of stabilizers of a quadratic and a cubic.

By Lemma~\ref{lem:maximal-stab} the quadratic stabilizer is isomorphic to $\GammaL(1,p^2)$.
Since $N(p,3)=1$, the stabilizer of a cubic has order $3(p-1)$ as there are $(p^3-p)/3$ monic irreducible cubics in $\mathbb{F}_p[x]$.
In $\PGL(2,p)$, this is the intersection of the dihedral group of order $2p+2$ with a cyclic group of order 3.
Therefore, when $p\equiv 0,1\mod 3$, these intersections are always trivial in $\PGL(2,p)$, and the number of orbits in this case is
\[ \dfrac{\frac{1}{2}(p^2-p)\frac{1}{3}(p^3-p)}{(p^3-p)} = \dfrac{1}{6} (p^2-p). \]

When $p\equiv 2 \mod 3$, Lemma~\ref{lem:Syl-3} implies that the Sylow 3-subgroups are cyclic and so contain a unique subgroup of order 3.
As there are $(p^3-p)/3$ irreducible cubics and $(p^2-p)/2$ subgroups of order 3 in $\PGL(2,p)$, it follows that $2(p+1)/3$ cubics have identical stabilizers in $\PGL(2,p)$.
Hence, for a fixed irreducible quadratic there are $2(p+1)/3$ irreducible cubics such that their stabilizer has order 3;
for the remaining $(p^3-3p-2)/3$ cubics, their stabilizer is trivial.
Therefore, the number of orbits is given by
\[ \dfrac{\frac{1}{2}(p^2-p)\left(2(p+1) + \frac{1}{3}(p^3-3p-2)\right)}{(p^3-p)} = \dfrac{1}{6}(p^2-p+4). \qedhere\]
\end{proof}

\section*{Acknowledgements}

We thank Mikhail A.\ Chebotar and Jenya Soprunova for translating parts of \cite{Vishnevetskii:PGL-orbits}.

\begin{bibdiv}
\begin{biblist}

\bib{Artin:Geometric-algebra}{book}{
   author={Artin, E.},
   title={Geometric algebra},
   publisher={Interscience Publishers, Inc., New York-London},
   date={1957},
   pages={x+214},
   review={\MR{0082463}},
}

\bib{Beisiegel:ses-groups}{article}{
   author={Beisiegel, Bert},
   title={Semi-extraspezielle $p$-Gruppen},
   language={German},
   journal={Math. Z.},
   volume={156},
   date={1977},
   number={3},
   pages={247--254},
   issn={0025-5874},
   review={\MR{0473004}},
}

\bib{BMW}{article}{
   author={Brooksbank, Peter A.},
   author={Maglione, Joshua},
   author={Wilson, James B.},
   title={A fast isomorphism test for groups whose Lie algebra has genus 2},
   journal={J. Algebra},
   volume={473},
   date={2017},
   pages={545--590},
   issn={0021-8693},
   review={\MR{3591162}},
}
%

\bib{BOW:graded-algebras}{unpublished}{
   author={Brooksbank, Peter A.},
   author={O'Brien, E.A.},
   author={Wilson, James B.},
   title={A fast isomorphism test for groups whose Lie algebra has genus 2},
   status={preprint},
   note={\url{arXiv:1708.08873}},
}

\bib{Camina}{article}{
   author={Camina, A. R.},
   title={Some conditions which almost characterize Frobenius groups},
   journal={Israel J. Math.},
   volume={31},
   date={1978},
   number={2},
   pages={153--160},
   issn={0021-2172},
   review={\MR{516251}},
}

\bib{CMS:Frobenius-groups}{article}{
   author={Chillag, David},
   author={Mann, Avinoam},
   author={Scoppola, Carlo M.},
   title={Generalized Frobenius groups. II},
   journal={Israel J. Math.},
   volume={62},
   date={1988},
   number={3},
   pages={269--282},
   issn={0021-2172},
   review={\MR{955132}},
}

\bib{Dempwolff:81}{article}{
   author={Dempwolff, Ulrich},
   title={Semifield planes of order 81},
   journal={J. Geom.},
   volume={89},
   date={2008},
   number={1-2},
   pages={1--16},
   issn={0047-2468},
   review={\MR{2457018}},
}

\bib{Dickson}{book}{
   author={Dickson, Leonard Eugene},
   title={Linear groups: With an exposition of the Galois field theory},
   series={with an introduction by W. Magnus},
   publisher={Dover Publications, Inc., New York},
   date={1958},
   pages={xvi+312},
   review={\MR{0104735}},
}

\bib{Dieudonne:pairs-mats}{article}{
   author={Dieudonn\'e, Jean},
   title={Sur la r\'eduction canonique des couples de matrices},
   language={French},
   journal={Bull. Soc. Math. France},
   volume={74},
   date={1946},
   pages={130--146},
   issn={0037-9484},
   review={\MR{0022826}},
}

\bib{Dornhoff:rep-theory}{book}{
   author={Dornhoff, Larry},
   title={Group representation theory. Part A: Ordinary representation
   theory},
   note={Pure and Applied Mathematics, 7},
   publisher={Marcel Dekker, Inc., New York},
   date={1971},
   pages={vii+pp. 1--254},
   review={\MR{0347959}},
}

\bib{Hall:isoclinism}{article}{
   author={Hall, P.},
   title={The classification of prime-power groups},
   journal={J. Reine Angew. Math.},
   volume={182},
   date={1940},
   pages={130--141},
   issn={0075-4102},
   review={\MR{0003389}},
}

\bib{Huppert}{book}{
   author={Huppert, B.},
   title={Endliche Gruppen. I},
   language={German},
   series={Die Grundlehren der Mathematischen Wissenschaften, Band 134},
   publisher={Springer-Verlag, Berlin-New York},
   date={1967},
   pages={xii+793},
   review={\MR{0224703}},
}

\bib{JJB:Handbook}{book}{
   author={Johnson, Norman L.},
   author={Jha, Vikram},
   author={Biliotti, Mauro},
   title={Handbook of finite translation planes},
   series={Pure and Applied Mathematics (Boca Raton)},
   volume={289},
   publisher={Chapman \& Hall/CRC, Boca Raton, FL},
   date={2007},
   pages={xxii+861},
   isbn={978-1-58488-605-1},
   isbn={1-58488-605-6},
   review={\MR{2290291}},
}

\bib{Kantor:finite-semifields}{article}{
   author={Kantor, William M.},
   title={Finite semifields},
   conference={
      title={Finite geometries, groups, and computation},
   },
   book={
      publisher={Walter de Gruyter, Berlin},
   },
   date={2006},
   pages={103--114},
   review={\MR{2258004}},
}

\bib{King}{article}{
   author={King, Oliver H.},
   title={The subgroup structure of finite classical groups in terms of
   geometric configurations},
   conference={
      title={Surveys in combinatorics 2005},
   },
   book={
      series={London Math. Soc. Lecture Note Ser.},
      volume={327},
      publisher={Cambridge Univ. Press, Cambridge},
   },
   date={2005},
   pages={29--56},
   review={\MR{2187733}},
}

\bib{Lewis:expository}{unpublished}{
   author={Lewis, Mark L.},
   title={Semi-extraspecial groups},
   status={submitted},
   note={\url{arXiv:1709.03857}},
}

\bib{Lewis:ses-groups}{unpublished}{
   author={Lewis, Mark L.},
   title={Semi-extraspecial groups with an abelian subgroup of maximal possible order},
   status={submitted},
   note={\url{arXiv:1710.10299}},
}

\bib{LW:Heisenberg}{article}{
   author={Lewis, Mark L.},
   author={Wilson, James B.},
   title={Isomorphism in expanding families of indistinguishable groups},
   journal={Groups Complex. Cryptol.},
   volume={4},
   date={2012},
   number={1},
   pages={73--110},
   issn={1867-1144},
   review={\MR{2921156}},
}

\bib{Macdonald:semi-extraspecial}{article}{
   author={Macdonald, I. D.},
   title={Some $p$-groups of Frobenius and extra-special type},
   journal={Israel J. Math.},
   volume={40},
   date={1981},
   number={3-4},
   pages={350--364 (1982)},
   issn={0021-2172},
   review={\MR{654591}},
}

\bib{M:efficient-filters}{article}{
   author={Maglione, Joshua},
   title={Efficient characteristic refinements for finite groups},
   journal={J. Symbolic Comput.},
   volume={80},
   date={2017},
   number={part 2},
   part={part 2},
   pages={511--520},
   issn={0747-7171},
   review={\MR{3574524}},
}

\bib{Scharlau:alt-forms}{article}{
   author={Scharlau, Rudolf},
   title={Paare alternierender Formen},
   journal={Math. Z.},
   volume={147},
   date={1976},
   number={1},
   pages={13--19},
   issn={0025-5874},
   review={\MR{0419484}},
}

\bib{Steinberg:rep-PGL}{article}{
   author={Steinberg, Robert},
   title={The representations of ${\rm GL}(3,q), {\rm GL} (4,q), {\rm PGL}
   (3,q)$, and ${\rm PGL} (4,q)$},
   journal={Canadian J. Math.},
   volume={3},
   date={1951},
   pages={225--235},
   issn={0008-414X},
   review={\MR{0041851}},
}

\bib{Vaughan-Lee:PGL-orbits}{unpublished}{
   author={Vaughan-Lee, Michael},
   title={Orbits of irreducible binary forms over GF($p$)},
   status={preprint},
   note={\url{arXiv:1705.07418}},
}

\bib{Vishnevetskii:PGL-orbits}{article}{
   author={Vishnevetski\u\i , A. L.},
   title={The number of classes of projectively equivalent binary forms over
   a finite field},
   language={Russian, with English summary},
   journal={Dokl. Akad. Nauk Ukrain. SSR Ser. A},
   date={1982},
   number={4},
   pages={9--12, 86},
   issn={0201-8446},
   review={\MR{659930}},
}

\bib{Verardi}{article}{
   author={Verardi, Libero},
   title={Semi-extraspecial groups of exponent $p$},
   language={Italian, with English summary},
   journal={Ann. Mat. Pura Appl. (4)},
   volume={148},
   date={1987},
   pages={131--171},
   issn={0003-4622},
   review={\MR{932762}},
}

\bib{Wilson:direct-decomposition}{article}{
   author={Wilson, James B.},
   title={Existence, algorithms, and asymptotics of direct product
   decompositions, I},
   journal={Groups Complex. Cryptol.},
   volume={4},
   date={2012},
   number={1},
   pages={33--72},
   issn={1867-1144},
   review={\MR{2921155}},
}

\bib{Wilson:filters}{article}{
   author={Wilson, James B.},
   title={More characteristic subgroups, Lie rings, and isomorphism tests
   for $p$-groups},
   journal={J. Group Theory},
   volume={16},
   date={2013},
   number={6},
   pages={875--897},
   issn={1433-5883},
   review={\MR{3198722}},
}

\end{biblist}
\end{bibdiv}

\end{document}